\newtheorem{theorem}{Theorem}[section]
\newtheorem{lemma}[theorem]{Lemma}
\newtheorem{proposition}[theorem]{Proposition}
\begin{document}



\begin{frontmatter}
\title{The   Fokas-Lenells equation on the  line: Global well-posedness   with solitons}


\author[inst2]{Qiaoyuan Cheng}

\author[inst2]{Engui Fan$^{*,}$  }

\address[inst2]{ School of Mathematical Sciences and Key Laboratory of Mathematics for Nonlinear Science, Fudan University, Shanghai,200433, China\\
* Corresponding author and e-mail address: faneg@fudan.edu.cn  }





\begin{abstract}

    In this paper, we prove the existence of global solutions in  $H^3(\mathbb{R})\cap H^{2,1}(\mathbb{R})$ to the Fokas-Lenells (FL) equation on
     the line when the initial data includes solitons.
  A key tool in proving this result   is  a newly modified Darboux transformation, which adds or subtracts
      a soliton with given spectral and scattering parameters.
      In this way   the inverse scattering transform technique is then applied to establish  the global well-posedness of
        initial value problem  with a finite number of solitons based on our previous results on the global well-posedness of the FL equation.

\end{abstract}

\begin{keyword}
  Fokas-Lenells equation \sep Cauchy problem\sep weighted Sobolev space\sep modified Darboux transformation \sep global well-posedness.

  \textit{Mathematics Subject Classification:} 35P25; 35Q51; 35Q15; 35A01; 35G25.
  \end{keyword}
\end{frontmatter}
\tableofcontents

    \section{Introduction}
  The present paper is concerned with   the existence  of global solutions to  the Cauchy problem  for   the Fokas-Lenells (FL) equation on the line
  \begin{align}
    & u_{tx}+ u -2iu_x - u_{xx}- |u|^2u_x=0 \label{cs} \\
    &u(x,t)|_{t=0} = u_0(x).\label{cs1}
 \end{align}
The FL equation is an integrable generalization of the nonlinear Schr\"{o}dinger (NLS) equation \cite{Fokas1}.
It   was  also used as a  model for the femtosecond pulse propagation through single mode optical silica fiber \cite{Hos1,Hos2,Hos3}.
In recent years, some interesting mathematical structure and exact solutions to the FL equation have been studied by using various  methods \cite{dfa37, oan36, adm38,adm39,zhang2021, shc41, rwo34,Lenells2009,xiaofan,zhao2013,xujian2,zf1}. For the Schwartz initial data  $u_0(x)\in S(\mathbb{R})$,
  we obtained the long-time asymptotics for the solution  to  the Cauchy problem (\ref{cs})-(\ref{cs1}) via Deift-Zhou steepest descent method  \cite{Lta}.
 For the   weighted Sobolev initial data   $u_0(x)\in H^{3,3}(\mathbb{R})$,   we  presented
  long-time behaviors of the solution  to  the Cauchy problem (\ref{cs})-(\ref{cs1})  by using $\bar\partial$-steepest descent method  \cite{ cqy}.
It is well-known that the existence of a global solutions or the well-posedness of the initial value problem of a partial
 differential equation is the theoretical guarantee to study  the long-time asymptotics.
The  global well-posedness of the periodic initial value problem for the FL equation   was proved by  Fokas and  Himonas  \cite{wpo10}.
A natural question is whether a global solution to  the FL equation on the line for appropriate  initial data $u_0(x)$ exists.

 Recently,  for the weighted Sobolev initial value     $u_{0}  \in H^3(\mathbb{R}) \cap H^{2,1}(\mathbb{R})$  including    no eigenvalues or resonances,
  we have proved  that there exists a unique global solution to  the Cauchy problem (\ref{cs})-(\ref{cs1}) of the FL equation    \cite{laff2}
 $$u  \in C\left([0, \infty); H^3(\mathbb{R}) \cap H^{2,1}(\mathbb{R})\right). $$
 Furthermore, the map
    \begin{equation}
    H^3(\mathbb{R}) \cap H^{2,1}(\mathbb{R}) \ni u_{0} \mapsto u \in C\left([0, \infty); H^3(\mathbb{R}) \cap H^{2,1}(\mathbb{R})\right)
    \end{equation}
    is Lipschitz continuous.
   The   assumption on the initial data $u_0(x)$ is  reasonable  due to   the fact that  if the norm
    \begin{equation}
    2\|u_{0,x}\|^2_{L^2}+\|u_{0,x}\|^3_{L^3}+2\|u_{0,xx}\|_{L^1}+\|u_{0,x}\|_{L^1}<1,\label{xyq3}
    \end{equation}
     then the scattering coefficient $a(k)$ has  no eigenvalues or resonances.
     Our  main technical  tool is the inverse scattering  transform method based on the representation
    of a Riemann-Hilbert (RH) problem associated with the above Cauchy problem.
 This technique was  first applied to prove the global well-posedness of the derivative NLS equation \cite{dep}.
Recently, we also used it to prove the  existence of global solutions to the nonlocal Schrodinger equation on the line  \cite{zf2022}.

\begin{figure}[H]
 \centering
  \begin{tikzpicture}[node distance=2cm]
 \coordinate (A) at (-4.3, 1.2);
 \fill (A) circle (0pt) node[left] {\footnotesize  $u(0,x )\in \mathcal{Z}_N$};
  \draw[ ->](-4.3, 1.2)--(-2.7, 1.2);
 \coordinate (B) at (-2.6, 1.2);
 \fill (B) circle (0pt) node[right] {\footnotesize  $u^{(1)}(0,x)\in \mathcal{Z}_{N-1}$};
  \draw[ ->](0.6, 1.2)--(1.6, 1.2);
   \draw(2, 1.2)node {$\cdots$};
    \draw[ ->](2.6, 1.2)--(3.6, 1.2);
   \coordinate (C) at (3.6, 1.2);
 \fill (C) circle (0pt) node[right] {\footnotesize  $u^{(N)}(0,x)\in \mathcal{Z}_0$};
 \coordinate (D) at (-4.3, -1);
 \fill (D) circle (0pt) node[left] {\footnotesize  $u(t, x )\in \mathcal{Z}_N$};
  \draw[ <-](-4.3, -1)--(-2.7,-1);
 \draw(-3.5, 1.7)node[below]{\scriptsize   {$DT$} };
 \coordinate (F) at (-2.6, -1);
 \fill (F) circle (0pt) node[right] {\footnotesize  $u^{(1)}(t, x)\in \mathcal{Z}_{N-1}$};
  \draw[  <-](0.6, -1)--(1.6, -1);
   \draw(1.1, 1.7)node[below]{\scriptsize     {$DT$} };
      \draw(3.1, 1.7)node[below]{\scriptsize    {$DT$} };
         \draw(1.1, -0.5)node[below]{\scriptsize     {$DT$} };
      \draw(3.1, -0.5)node[below]{\scriptsize     {$DT$} };
   \draw(2, -1)node {$\cdots$};
    \draw[  <-](2.6, -1)--(3.6, -1);
   \coordinate (G) at (3.6, -1);
 \fill (G) circle (0pt) node[right] {\footnotesize  $u^{(N)}(t, x)\in \mathcal{Z}_0$};
  \draw(-3.5,-0.5)node[below]{\scriptsize    {$DT$} };
\draw[ ->](-5.5, 0.9)--(-5.5, -0.6);
  \draw[ ->](4.5, 0.9)--(4.5, -0.6);
  \draw(0.9,  -2.1)node[right]{\footnotesize  {$\text{ global solution without solitons}$} };
\draw(-6.5, -2.1)node[right]{ \footnotesize  {$\text{  global solution  with solitons}$} };
   \draw[ ->](-5.5, -1.3)--(-5.5, -1.8);
      \draw[ ->](4.5, -1.3)--(4.5, -1.8);
  \end{tikzpicture}
\caption{ The  zeros of the scattering data $a(k)$  and the potential $u$ can be removed   or added by using
an appropriate  Darboux transformation. }\label{fiey}
\end{figure}
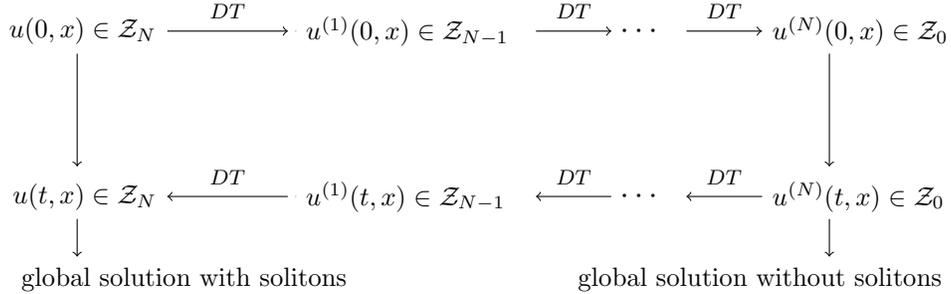

 The goal of our present work is to extend our previous results  on existence of global solutions \cite{laff2}  to the case
that the initial data $u_0  \in H^3(\mathbb{R}) \cap H^{2,1}(\mathbb{R})  $   may allow the  appearance of  zeros of the scattering coefficient $a(k)$.
The main  results are enunciated in the following theorem.
\begin{theorem}  \label{74}
For every $u_0 \in H^3(\mathbb{R}) \cap H^{2,1}(\mathbb{R})$ such that the spectral problem (\ref{cslp})
admits no resonances, there exists a unique global solution $u(t,\cdot) \in H^3(\mathbb{R}) \cap$ $H^{2,1}(\mathbb{R})$ of
the Cauchy problem (\ref{cs})-(\ref{cs1}) for every $t\in\mathbb{R}$.
\end{theorem}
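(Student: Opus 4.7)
The plan is to reduce the problem with solitons to the already-solved no-soliton case through a modified Darboux transformation (DT), following exactly the commutative diagram in Figure~\ref{fiey}. Given initial data $u_0 \in H^3(\mathbb{R})\cap H^{2,1}(\mathbb{R})$ whose scattering coefficient $a(k)$ has $N$ simple zeros $\{k_j\}_{j=1}^N$ in the analyticity domain (but no resonances), I would construct inductively a sequence $u_0=u^{(0)}, u^{(1)},\dots,u^{(N)}$ in which each step peels off exactly one eigenvalue. At stage $j$, the modified DT is built from the Jost solution of the FL Lax pair at $k=k_j$ together with the norming constant $c_j$; the output $u^{(j)}$ has scattering coefficient with one fewer zero than $u^{(j-1)}$, while its reflection coefficient on the real line is essentially unchanged. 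After $N$ iterations, $u^{(N)}\in \mathcal{Z}_0$ has neither eigenvalues nor resonances.

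The analytic input required at this first stage is that each application of the modified DT maps $H^3\cap H^{2,1}$ into itself. This would follow from the exponential spatial decay of the Jost function evaluated at $k_j$ inside the analyticity domain, combined with explicit differentiation of the dressing formula, showing that the three derivatives and the first moment are controlled in $L^2$. Correspondingly, the inverse DT (adding a soliton back) must also be shown to be continuous on the same space, so that the composition of the $N$-fold DT and its inverse is a bijection between $\mathcal{Z}_N \cap (H^3\cap H^{2,1})$ and $\mathcal{Z}_0 \cap (H^3\cap H^{2,1})$, with uniformly controlled norms.

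Once $u^{(N)}(0,\cdot)\in \mathcal{Z}_0\cap H^3\cap H^{2,1}$ is obtained, our earlier result \cite{laff2} supplies a unique global solution $u^{(N)}(t,\cdot)\in C(\mathbb{R};H^3\cap H^{2,1})$; time-reversibility of the FL equation extends the evolution to $t<0$, and the absence of eigenvalues is preserved because the reflection coefficient evolves by an explicit unimodular phase and no zero of $a(k,t)$ can emerge. The final step is to invert the DT at the level of the time-dependent solution: the removed spectral parameters $k_j$ are conserved by the isospectral flow, while the norming constants evolve by an explicit exponential dictated by the Lax pair, so applying the $N$-fold inverse DT to $u^{(N)}(t,\cdot)$ reconstructs $u(t,\cdot)\in H^3\cap H^{2,1}$ for all $t\in\mathbb{R}$. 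Uniqueness is immediate: two putative solutions from the same $u_0$ produce identical scattering data, hence identical $u^{(N)}(t,\cdot)$ by uniqueness in the no-soliton case, and the DT then yields a single $u(t,\cdot)$.

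The main obstacle is the first stage, that is, proving that the modified DT and its inverse are continuous maps on $H^3\cap H^{2,1}$ with quantitative bounds. The FL equation's $u_{tx}$ term and derivative nonlinearity force the dressing formula to deviate from the standard NLS Darboux formula, so one cannot simply cite existing DT estimates; in particular, one must control denominators of the form $1-|c_j|^2 e^{2i(k_j-\overline{k_j})x+\cdots}$ uniformly in $x$, verify that the three weak derivatives and the first spatial moment transform correctly, and check that no new resonance is accidentally introduced at an intermediate stage $j<N$. A secondary technical point is tracking the time dependence of the norming constants carefully enough that the reconstructed $u(t,\cdot)$ lies in $H^3\cap H^{2,1}$ uniformly on compact time intervals, which is essential for continuity in $t$.
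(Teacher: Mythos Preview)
Your proposal is correct and follows essentially the same approach as the paper: iterate the modified Darboux transformation to strip off the $N$ eigenvalues, invoke the no-soliton global well-posedness result from \cite{laff2} for $u^{(N)}\in\mathcal{Z}_0$, and then invert the DT (with the explicitly evolving norming constants) to recover $u(t,\cdot)\in H^3\cap H^{2,1}$ for all $t$. The paper organizes the argument slightly differently by first taking a local solution on $(-T,T)$ and matching scattering data to identify $u^{(1)}$ with the global $\mathcal{Z}_0$-solution, but the content---in particular the key estimates that the DT and its inverse preserve $H^3\cap H^{2,1}$ with quantitative bounds (your ``main obstacle'')---is exactly what is carried out in Propositions~\ref{53} and~\ref{34}.
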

 This result is achieved by construction of    a new invertible   Darboux transformation (DT) which removes
 these  zeros  of   $a(k)$ so that we can apply the previous results \cite{laff2}.
The proof  scheme  can be described as the diagram in Figure \ref{fiey}, in which we denote the space
    \begin{equation}
  \mathcal{ Z}_N=\{ u \in H^3(\mathbb{R})\cap H^{2,1}(\mathbb{R}),   a(k_j)=0, \ k_j\in \mathbb{C}_I, j=1,\cdots, N  \}\label{spacef}
    \end{equation}
  The   Darboux and  B\"{a}cklund transformations have  been successfully applied in the NLS equation and derivative NLS equation
 to prove  existence of global solutions with solitons  or asymptotic  stability of solitons  \cite{Deift,pp2017,Cuccagna}.

Indeed there  is some   work on the Darboux transformations  of FL equation to construct soliton solutions or rogue wave solutions \cite{lra40,grw2019,jsh2012,yw2020}.
 However, these classical Darboux transformations do not involve the key components of inverse scattering theory such as the analyticity of Jost functions,
 the distribution of zeros of the scattering data  $a(k)$.  Moreover, considering our special requirement, our  desired  Darboux transformations should be
invertible  and has no singularity with respect to the variable $x\in \mathbb{R}$, which can therefore be used to add  or subtract zeros of  scattering data $a(k)$,
and preserves the original and the new   potentials in the same Sobolev space $H^3(\mathbb{R}) \cap H^{2,1}(\mathbb{R})$.
For this purpose,
 we construct   a kind of   modified   two-fold  Darboux transformation.  A little different from  those \cite{lra40,grw2019,jsh2012,yw2020},
 our   Darboux matrix  is in the  term of the negative power  for  the spectral  parameter $k$.
 The invertibility of the  Darboux transformation helps us establish a bijection between the $0$-soliton solution and the $1$-soliton solution.
 Therefore, according to the procedure of Figure \ref{fiey}, we obtain the existence of global solutions to the Cauchy problem (\ref{cs})-(\ref{cs1}) with finite solitons.

     The structure of the paper is as follows. In  Section \ref{sec:section2}, we quickly  review   some main results on the direct scattering transform detailed in \cite{laff2}.
   In Section \ref{sec:section3},  we present  a new two-fold Darboux transformation which is related to  direct scattering transform and  suitable for our subsequent analysis.
    The invertibility of the   Darboux transformation is  further shown. In  Section \ref{sec3},
     we concentrate on the properties of  scattering data  and Jost functions  under the action of the Darboux transformation. We prove that both
     original and new   potentials   belongs to the same Sobolev space  $H^3(\mathbb{R})\cap H^{2,1}(\mathbb{R})$.
      In Section \ref{sec4}, we  consider the time evolution of the   Darboux transformation
       and obtain  the existence of   global solutions in the space  $H^3(\mathbb{R})\cap H^{2,1}(\mathbb{R})$ to FL equation on the  line  when the initial data $u_0$ includes  solitons.

    \section{Review of  the direct scattering transform   }
    \label{sec:section2}
 In this paper, we define and use the following  weighted Sobolev spaces
    \begin{align}
    &L^{p,s}(\mathbb{R}):=\{ u \in L^{p}(\mathbb{R}): \ \   \langle x \rangle^s u  \in L^{p}(\mathbb{R})  \}, \nonumber\\
    &H^{k,s}(\mathbb{R})=\left\{u  \in L^{2,s}(\mathbb{R}): \ \  \partial^j_xu   \in L^{2,s}(\mathbb{R}),\quad j=1,\cdots,k \right\},\nonumber
    \end{align}
    where $ \langle x \rangle := \sqrt{1+|x|^2}$.

  \subsection{Existence of Jost functions}
    \label{sub21}
\hspace*{\parindent}

    We review some main results on the direct scattering transform associated with the Cauchy problem  (\ref{cs})-(\ref{cs1})   and the existence   of its Jost solutions.
    For details, please see  \cite{laff2}.
    The FL equation (\ref{cs}) admits a  Lax pair \cite{dfa37,oan36}
    \begin{align}
    &\phi_x+ik^2\sigma_3\phi=kP_x\phi,\label{cslp}\\
    &\phi_t+i\eta^2\sigma_3\phi=H\phi,\label{cslp2}
    \end{align}
    where
    \begin{align}
    & \sigma_3=\begin{pmatrix}
    1&0\\
    0&-1
    \end{pmatrix}, \ \ \ \  P=\begin{pmatrix}
    0&u\\
    -\bar{u}&0
    \end{pmatrix},\label{qq}\\
    &
    \eta=\left(k-\frac{1}{2k}\right),\quad H= kU_x+\frac{i}{2}\sigma_3\left(\frac{1}{k}P-P^2\right). \nonumber
    \end{align}
    By making  a transformation
    \begin{equation}
    \psi(x,t;k)=\phi(x,t;k)e^{i(k^2x+\eta^2t)\sigma_3},\label{b3}
    \end{equation}
    the Lax pair (\ref{cslp})-(\ref{cslp2}) is changed to
    \begin{align}
    &\psi_x+ik^2[\sigma_3,\psi]=kP_x\psi,\label{laxn1}\\
    &\psi_t+i\eta^2[\sigma_3,\psi]=H\psi.\label{laxn2}
    \end{align}
    This  Lax pair admits   the Jost functions with asymptotics
    \begin{equation}
    \psi_{\pm}(x,t;k)\sim I, \quad  x\rightarrow \pm \infty,\label{j1j}
    \end{equation}
    which  satisfy    Volterra  integral equations
    \begin{equation}
    \psi_{\pm}(x,t;k)=I+k\int_{ \pm \infty }^{ x }e^{-2ik^2(x-y)\widehat{\sigma}_3}
    P_y(y)\psi_{\pm}(y,t;k) dy.\label{fi5}
    \end{equation}

    \begin{proposition} \label{proposition1}
    Let $u_0(x)\in H^3(\mathbb{R})\cap H^{2,1}(\mathbb{R})$, and denote
    $\psi_{\pm}(x,t;k)=\left(\psi_{\pm,1} (x,t;k),\psi_{\pm,2}(x,t;k)\right) $
    with the scripts  $1$ and $2$ denoting the first and second columns of $\psi_{\pm}(x,t;k)$. Then we have
    \begin{itemize}

    \item[$\blacktriangleright$] {\bf Analyticity}:   The integral equation (\ref{fi5}) would admit a    unique  solution
      $\psi_{\pm}(x,t;k)$.  Moreover, $ \psi_{-,1}(x,t;k)$, $ \psi_{+,2}(x,t;k)$   are analytical in the domain $ D_+;$  and   $ \psi_{+,1}(x,t;k)$ and $\psi_{-,2}(x,t;k)$ are analytical  in
      in the domain $D_-$,
      where
    \begin{equation}
    D_+=\{ k:  {\rm Im} k^2>0\}, \  \ D_-=\{ k:  {\rm Im} k^2<0\};
    \end{equation}

    \item[$\blacktriangleright$] {\bf Symmetry}:     $ \psi_{\pm}(x,t;k)$    satisfy   the symmetry relations
    \begin{align}
    &\psi_{\pm}(x,t;k) =\sigma_2 \overline{\psi_{\pm}(x,t;\bar {k} )} \sigma_2,\ \ \ \psi_{\pm}(x,t;k) =\sigma_3  \psi_{\pm}(x,t;-k) \sigma_3,\label{dcx}
    \end{align}
    \item[$\blacktriangleright$] {\bf Asymptotics}:   $ \psi_{\pm}(x,t;k)$   have asymptotic properties
    \begin{align}
    &\psi_{\pm}(x,t;k)=\mathrm{e}^{ -ic_\pm(x)\sigma_3}+\mathcal{O}(k^{-1}),\quad k\rightarrow\infty,\label{jjw}
    \end{align}
    where
    \begin{equation}
    c_\pm(x)=\frac{1}{2}\int_{\pm\infty}^x|u_y(y,t)|^2dy. \label{c1c2}
    \end{equation}

    \item[$\blacktriangleright$] {\bf Boundedness}: For every $k$ with $\operatorname{Im}(k^2)>0$ and for all $u$ satisfying $\|u_x\|_{L^1 \cap L^{\infty}}+\left\|\partial^2_x u\right\|_{L^1} \leq M$ there exists a constant $C_M$ which does not depend on $u$, such that
   \begin{equation}
    \left\|\psi_{\pm}(\cdot ;k)\right\|_{L^{\infty}} \leq C_M.\label{psib}
   \end{equation}
    \end{itemize}
    \end{proposition}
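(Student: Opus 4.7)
The proposition collects the standard analytic package for Jost functions, and the plan is to derive all four items from the Volterra representation (\ref{fi5}) together with the algebraic symmetries of $P$. For existence and analyticity, I would set up the Neumann iteration $\psi_\pm^{(0)} = I$, $\psi_\pm^{(n+1)} = I + k\int_{\pm\infty}^x e^{-2ik^2(x-y)\hat\sigma_3} P_y\,\psi_\pm^{(n)}\, dy$. The apparent linear growth in $k$ coming from the prefactor is tamed by a single integration by parts, which transfers the derivative from $P_y$ onto the oscillatory factor and produces compensating $1/k^2$ factors; the resulting equivalent integral operator has a kernel of order $1$ uniformly in $k$ on the closed half-planes, with pointwise size controlled by $\|u_x\|_{L^\infty}$ and $\|u_{xx}\|_{L^1}$. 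The integration direction $\int_{\pm\infty}^x$ is chosen precisely so that the off-diagonal exponentials in $e^{-2ik^2(x-y)\hat\sigma_3}$ acting on each column remain bounded when $k$ lies in the analyticity domain ($D_+$ or $D_-$) associated to that column, which yields geometric convergence of the series in $L^\infty_x$ and the column-wise analyticity stated in the first bullet.

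The symmetries are then immediate from uniqueness. The matrix identities $\sigma_2\bar P\sigma_2 = P$, $\sigma_3 P\sigma_3 = -P$, and $\sigma_2\sigma_3\sigma_2 = -\sigma_3$ imply that both $\sigma_2\overline{\psi_\pm(x,t;\bar k)}\sigma_2$ and $\sigma_3\psi_\pm(x,t;-k)\sigma_3$ satisfy the same Volterra equation (\ref{fi5}) with the same normalization at $\pm\infty$ as $\psi_\pm(x,t;k)$, and the relations (\ref{dcx}) follow.

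For the large-$k$ asymptotic expansion, I would decompose $\psi_\pm$ into diagonal and off-diagonal parts and determine them recursively in powers of $1/k$. Integration by parts against the rapidly oscillating exponential exhibits the off-diagonal part as $O(1/k)$ with leading coefficient algebraically expressible through $P$ and the diagonal leading term. Substituting back into the equation for the diagonal part closes a first-order linear ODE in $x$ whose coefficient is proportional to $|u_x|^2\sigma_3$; integrating against the normalization $\psi_\pm \to I$ at $x = \pm\infty$ produces exactly $e^{-ic_\pm(x)\sigma_3}$ with $c_\pm$ as in (\ref{c1c2}). The uniform bound (\ref{psib}) is then read off from the Neumann series of the first step, since each summand is bounded pointwise in $x$ and $k \in D_+$ by a constant depending only on $M$. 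The main technical obstacle throughout is the $k$-dependent prefactor in (\ref{fi5}); the integration-by-parts device that converts this equation into one with a kernel bounded uniformly in $k$ is the single idea that unlocks each of the four items, and the regularity $u_0 \in H^3\cap H^{2,1}$ is exactly what is needed to make the resulting boundary contributions at $\pm\infty$ vanish and the iterated estimates close.
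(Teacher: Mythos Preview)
The paper does not actually prove this proposition: it is stated in Section~\ref{sec:section2} as a review of results established in the companion paper \cite{laff2} (see the sentence ``For details, please see \cite{laff2}'' preceding the Lax pair), so there is no in-paper argument to compare your proposal against. Your outline is the standard route for Kaup--Newell type spectral problems and matches what one finds in \cite{laff2} and in the derivative NLS analogue \cite{dep}: Neumann iteration on the Volterra equation for existence/analyticity, uniqueness for the symmetries, and a diagonal/off-diagonal WKB recursion for the large-$k$ expansion.

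One point deserves more care than your sketch suggests. You write that ``a single integration by parts \ldots transfers the derivative from $P_y$ onto the oscillatory factor and produces compensating $1/k^2$ factors,'' but a direct integration by parts in (\ref{fi5}) gains only one power of $k$ on the off-diagonal while simultaneously producing $\partial_y\psi_\pm$, which by (\ref{laxn1}) carries a fresh factor of $k$; the naive count is therefore neutral. The way this is actually handled (in \cite{laff2,dep}) is either to iterate the Volterra equation once so that two factors of $k$ are paired with a double integral containing $e^{\pm 2ik^2(\cdot)}$ before integrating by parts, or equivalently to pass to a gauge-transformed unknown $\widetilde\psi_\pm = G(x)\psi_\pm$ that removes the $k$-linear term from the differential equation. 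Either device lands on an integral equation whose kernel is bounded uniformly for $k$ in the relevant half-plane by quantities controlled by $\|u_x\|_{L^1\cap L^\infty}+\|u_{xx}\|_{L^1}$, and from there your Neumann-series argument for (\ref{psib}) and the column-wise analyticity goes through exactly as you describe. The symmetry and asymptotic steps are correct as stated.
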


    In the following sections, we first consider   the partial spectral problem  (\ref{cslp})  with $t$ being  a parameter,  so we  omit the variable $t$ as usual. For example, $\psi(x,t;k)$ is just written as $\psi(x;k)$.  We will discuss the  time evolution of scattering data and  reconstruct the  potential $u(x,t)$ with $t$  in Section \ref{sec41}.

    Since  $\psi_{\pm}(x;k)e^{-ik^2x \sigma_3}$ are two solutions to the spectral problem (\ref{cslp}),
    they are   linearly dependent and satisfy the scattering relation
   \begin{equation}
   \psi_-(x;k)=\psi_+(x;k)e^{-ik^2x\widehat{\sigma}_3}S(k),\label{phstgx}
   \end{equation}
   where $S(k)$ is a scattering matrix given by
   \begin{equation}
   S(k)=\begin{pmatrix}
   a(k) & b(k) \\[3pt]
   -\overline{b(\bar{k})} & \overline{a(\bar{k})}
   \end{pmatrix}, \ \ \det S(k)=1.\label{xxgx}
   \end{equation}
   By  Proposition \ref{proposition1} and (\ref{phstgx}), $S(k)$ admits the following  symmetries
   \begin{align}
   &S(k) =\sigma_2 \overline{S( \bar {k} )} \sigma_2,\ \ \ S(k) =\sigma_3  S(-k) \sigma_3,  \label{dcx3}
   \end{align}
    and asymptotics
   \begin{align}
   &S(k)=I+\mathcal{O}(k^{-1}),\quad k\rightarrow\infty.
   \end{align}

    It is worth to mention that we would use the vector form $\varphi_{\pm}(x;k)$ and $\phi_\pm(x;k)$ which are the first and second columns respectively of the Jost function $\psi_\pm(x;k)$ in the following calculation and analysis, as analysis on the vector form preserves the same properties as on the matrix form and can make the calculation more concise. We need to convert the matrix results of the relevant Jost functions and scattering coefficients into the vector cases for use.
    \begin{proposition}
      Let $u \in H^3(\mathbb{R}) \cap H^{2,1}(\mathbb{R})$, for every $k \in$ $\mathbb{R} \cup i \mathbb{R}$, there exist unique solutions $\varphi_{\pm}(x ; k) e^{-i k^2 x}$ and $\phi_{\pm}(x ; k) e^{i k^2 x}$ to the spectral problem (\ref{laxn1}) with $\varphi_{\pm}(\cdot ; k) \in L^{\infty}(\mathbb{R})$ and $\phi_{\pm}(\cdot ; k) \in L^{\infty}(\mathbb{R})$ such that
      \begin{itemize}

        \item[$\blacktriangleright$] {\bf Asymptotics}:
        \begin{equation}
          \varphi_{\pm}(x ; k) \rightarrow e_1,
          \quad \phi_{\pm}(x ; k) \rightarrow e_2, \quad \text { as } \quad x \rightarrow \pm \infty;\label{vpjj}
        \end{equation}

        \item[$\blacktriangleright$] {\bf Scattering Data}: The scattering relation (\ref{phstgx}), the scattering data $a(k)$ and $b(k)$  can be expressed in term of the determinant
    \begin{align}
      &a(k)=\det(\varphi_-(x ; k) e^{-i k^2 x}, \phi_+(x ; k) e^{i k^2 x}),\label{ak} \\
      &b(k)=\det(\varphi_+(x ; k) e^{-i k^2 x}, \varphi_-(x ; k) e^{-i k^2 x}).\label{bk}
      \end{align}

    \end{itemize}
    \end{proposition}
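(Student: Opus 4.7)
The statement is essentially a column-wise reformulation of Proposition \ref{proposition1}, so my plan is to extract what is needed from the matrix-valued Jost functions and then derive the determinantal expressions for $a(k)$ and $b(k)$ from the scattering relation together with constancy of a Wronskian. First I would simply define $\varphi_{\pm}(x;k)$ and $\phi_{\pm}(x;k)$ to be the first and second columns of $\psi_{\pm}(x;k)$ furnished by Proposition \ref{proposition1}. Existence, uniqueness (for $k\in\mathbb{R}\cup i\mathbb{R}$ the integral equation (\ref{fi5}) is uniformly bounded in $x$), and the $L^{\infty}$ control inherited from (\ref{psib}) then transfer column by column, while the asymptotics $\varphi_{\pm}\to e_{1}$ and $\phi_{\pm}\to e_{2}$ as $x\to\pm\infty$ are immediate from $\psi_{\pm}(x;k)\to I$. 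The fact that the modulated vectors $\varphi_{\pm}(x;k)e^{-ik^{2}x}$ and $\phi_{\pm}(x;k)e^{ik^{2}x}$ solve the original spectral problem (\ref{cslp}) is just the observation that $\Phi_{\pm}(x;k):=\psi_{\pm}(x;k)e^{-ik^{2}x\sigma_{3}}$ is a solution of (\ref{cslp}) by the gauge (\ref{b3}), and its two columns are precisely these modulated vectors.

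Next I would establish that $\det\Phi_{\pm}(x;k)\equiv 1$. The coefficient matrix $-ik^{2}\sigma_{3}+kP_{x}$ of (\ref{cslp}) is trace-free, so Abel's identity gives $\det\Phi_{\pm}$ independent of $x$; passing to $x\to\pm\infty$ (where the phase factors stay bounded for $k\in\mathbb{R}\cup i\mathbb{R}$) and using $\psi_{\pm}\to I$ pins this constant down to $1$. To obtain the formula for $a(k)$, I would rewrite the matrix scattering relation (\ref{phstgx}) in the form $\Phi_{-}=\Phi_{+}S$, which in particular reads, column by column,
\begin{equation*}
\varphi_{-}(x;k)e^{-ik^{2}x}=a(k)\,\varphi_{+}(x;k)e^{-ik^{2}x}-\overline{b(\bar{k})}\,\phi_{+}(x;k)e^{ik^{2}x}.
\end{equation*}
Taking the determinant of the left-hand vector against $\phi_{+}(x;k)e^{ik^{2}x}$ eliminates the second term and, using $\det\Phi_{+}=1$, produces (\ref{ak}). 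A parallel manipulation, this time pairing the above decomposition with the vector $\varphi_{+}(x;k)e^{-ik^{2}x}$ to eliminate the $a(k)$-term (and then invoking the symmetry $S(k)=\sigma_{2}\overline{S(\bar k)}\sigma_{2}$ from (\ref{dcx3}) to convert $-\overline{b(\bar k)}$ into $b(k)$), yields (\ref{bk}).

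The only genuine obstacle is purely bookkeeping: tracking the conjugation $e^{-ik^{2}x\widehat{\sigma}_{3}}$ in (\ref{phstgx}) consistently so that it reduces to the clean right multiplication $\Phi_{-}=\Phi_{+}S$, and grouping the scalar phases $e^{\pm ik^{2}x}$ with the correct columns when taking the $2\times 2$ determinants. Once this is set up, the proposition reduces to elementary linear algebra inside the two-dimensional solution space of (\ref{cslp}) at a fixed $k$.
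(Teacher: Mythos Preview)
Your plan is exactly how the paper treats this proposition: the paragraph preceding it explicitly says ``we need to convert the matrix results of the relevant Jost functions and scattering coefficients into the vector cases for use,'' and no separate proof is given. Defining $\varphi_{\pm},\phi_{\pm}$ as the columns of $\psi_{\pm}$, reading off existence, uniqueness, $L^\infty$-bounds and asymptotics from Proposition~\ref{proposition1}, and using Abel's identity plus $\Phi_{-}=\Phi_{+}S$ to obtain (\ref{ak}) is precisely the intended routine reduction.

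There is one step that will not go through as written. Pairing the first column of $\Phi_{-}=\Phi_{+}S$ against $\varphi_{+}e^{-ik^{2}x}$ gives
\[
\det\bigl(\varphi_{+}e^{-ik^{2}x},\,\varphi_{-}e^{-ik^{2}x}\bigr)=-\overline{b(\bar k)},
\]
and the symmetry $S(k)=\sigma_{2}\overline{S(\bar k)}\sigma_{2}$ does \emph{not} convert $-\overline{b(\bar k)}$ into $b(k)$: that relation is merely the statement that the $(2,1)$ entry of $S$ equals $-\overline{b(\bar k)}$, and applying it twice is a tautology. To extract $b(k)$ itself as a Wronskian you should instead use the \emph{second} column of $\Phi_{-}=\Phi_{+}S$, namely $\phi_{-}e^{ik^{2}x}=b(k)\,\varphi_{+}e^{-ik^{2}x}+\overline{a(\bar k)}\,\phi_{+}e^{ik^{2}x}$, and pair with $\phi_{+}e^{ik^{2}x}$. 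The discrepancy with the exact formula (\ref{bk}) as printed is a sign/conjugation convention issue in the paper, not a flaw in your strategy.
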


  Then, we have

   \begin{proposition}  The scattering data $a(k)$ and $b(k)$ are even and odd functions
     respectively and admit  the following  properties
   \begin{itemize}

   \item[$\blacktriangleright$] {\bf Symmetries}:
   \begin{equation}
   a(-k)=a(k), \quad k \in \overline{D}_+, \quad b(-k)=-b(k), \quad \operatorname{Im} k^{2}=0;\label{abdc}
   \end{equation}

   \item[$\blacktriangleright$] {\bf  Asymptotics}:
   \begin{align}
   &a(k)=  e^{-ic} +\mathcal{O}\left(k^{-1}\right),\quad    \ \
    b(k)=\mathcal{O}\left(k^{-1}\right), \quad k \rightarrow \infty, \nonumber\\
    &a(k)=e^{-ic}\left(1+\mathcal{O}\left(k^2\right)\right),\quad    \ \
    b(k)=\mathcal{O}\left(k^3\right),\quad k\rightarrow 0,\label{aj0}
   \end{align}
   where
   \begin{equation}
   c=c_-(x) - c_+(x) = \frac{1}{2}\int_{-\infty}^\infty |u_x |^2dx,\nonumber
   \end{equation}
   and $c_\pm (x)$ are  defined by  (\ref{c1c2}).

   \end{itemize}
   \end{proposition}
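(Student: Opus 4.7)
My plan is to feed the structural properties of the Jost functions from Proposition \ref{proposition1} directly into the determinant representations (\ref{ak})--(\ref{bk}); each claim then drops out. First I would extract the action of $k\mapsto -k$ on the columns from the matrix identity $\psi_{\pm}(x;k)=\sigma_3\psi_{\pm}(x;-k)\sigma_3$ in (\ref{dcx}). Since right-multiplication by $\sigma_3$ flips the sign of the second column, columnwise this reads $\varphi_{\pm}(x;-k)=\sigma_3\varphi_{\pm}(x;k)$ and $\phi_{\pm}(x;-k)=-\sigma_3\phi_{\pm}(x;k)$. Substituting into (\ref{ak}), the minus sign from $\phi_\pm$ combines with the two factors of $\det\sigma_3=-1$ pulled out of the determinant to give $a(-k)=a(k)$; substitution into (\ref{bk}) leaves only one $\det\sigma_3$ and hence $b(-k)=-b(k)$.

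For the large-$k$ asymptotics I would plug (\ref{jjw}) into (\ref{ak})--(\ref{bk}). Columnwise, $\varphi_{\pm}(x;k)=e^{-ic_{\pm}(x)}e_1+O(k^{-1})$ and $\phi_{\pm}(x;k)=e^{ic_{\pm}(x)}e_2+O(k^{-1})$, so the $e^{\pm ik^2x}$ factors in (\ref{ak}) cancel inside the determinant and $a(k)=e^{-i(c_-(x)-c_+(x))}+O(k^{-1})=e^{-ic}+O(k^{-1})$. For $b(k)$, both arguments in (\ref{bk}) point along $e_1$ at leading order, so the $2\times 2$ determinant annihilates the leading term and only $O(k^{-1})$ survives.

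The small-$k$ analysis is the most delicate step. I would iterate the Volterra equation (\ref{fi5}) in powers of $k$, writing $\psi_{\pm}(x;k)=I+kA_1^{\pm}(x)+k^2 A_2^{\pm}(x)+\cdots$. Since $e^{-2ik^2(x-y)\widehat{\sigma}_3}=I+O(k^2)$ and $P$ decays at $\pm\infty$, the first coefficient $A_1^{\pm}(x)=\int_{\pm\infty}^{x}P_y(y)\,dy=P(x)$ is the \emph{same} for both labels. Hence $\varphi_+$ and $\varphi_-$ agree up to order $k$; the naive $O(k)$ size of $\det(\varphi_+,\varphi_-)$ in (\ref{bk}) is thereby reduced to $O(k^2)$, and the odd parity $b(-k)=-b(k)$ just proven eliminates this even term as well, producing $b(k)=O(k^3)$. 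For $a(k)$, even parity already restricts the expansion to even powers of $k$, and matching the leading constant with the large-$k$ value via the $x$-independence of the representation (\ref{ak}) yields $a(k)=e^{-ic}(1+O(k^2))$.

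The main obstacle is the order-of-vanishing argument for $b(k)$: the naive determinant yields only $O(k)$, and producing the claimed $O(k^3)$ requires both the identification $A_1^+=A_1^-$ (which rests on $P(\pm\infty)=0$, enabled by $u_x\in L^1$) and the parity constraint $b(-k)=-b(k)$ to suppress the remaining $O(k^2)$ contribution. Everything else reduces to bookkeeping with Proposition \ref{proposition1} and the Volterra iteration.
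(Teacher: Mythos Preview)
The paper does not prove this proposition here; it is quoted as a review result from \cite{laff2}, so there is no in-paper proof to compare against. I can still assess your argument on its own terms.

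Your symmetry and large-$k$ arguments are essentially sound. One cosmetic slip: for $a(-k)$ you get \emph{one} factor $\det\sigma_3=-1$ from pulling the common left-multiplication out of the $2\times 2$ matrix $[\sigma_3\varphi_-,\,-\sigma_3\phi_+]=\sigma_3[\varphi_-,\,-\phi_+]$, together with the sign from $\phi_+(-k)=-\sigma_3\phi_+(k)$; there are not two separate $\det\sigma_3$ factors. The product is still $+1$, so the conclusion is unaffected. Your $b(k)=O(k^3)$ argument at $k=0$, combining $A_1^+=A_1^-=P$ (which uses $u(\pm\infty)=0$, available since $u\in H^1$) with the odd parity just established, is also fine.

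The genuine gap is in your small-$k$ treatment of $a(k)$. You claim to recover $a(0)=e^{-ic}$ by ``matching the leading constant with the large-$k$ value via the $x$-independence of the representation (\ref{ak}).'' That is not a valid inference: $x$-independence only says that $a(k)$ is a function of $k$ alone; it furnishes no relation whatsoever between $a(0)$ and $\lim_{k\to\infty}a(k)$. In fact, setting $k=0$ directly in the Volterra equation (\ref{fi5}) gives $\psi_\pm(x;0)=I$ for all $x$, hence $S(0)=I$ from (\ref{phstgx}) and therefore $a(0)=1$, not $e^{-ic}$. So either the constant $e^{-ic}$ in (\ref{aj0}) reflects a second normalization of Jost functions near $k=0$ used in \cite{laff2} (which you would then have to import and invoke explicitly), or it is a misprint for $1$; in either case your ``matching'' step has no content and must be replaced by a direct evaluation of $a(0)$ from the Volterra iteration together with the even parity $a(-k)=a(k)$.
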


  \subsection{Regularity  of Jost functions}
    \label{sub22}
\hspace*{\parindent}

 \begin{proposition}
  \label{p52}
  For every $u \in H^3(\mathbb{R}) \cap H^{2,1}(\mathbb{R})$ satisfying $\|u\|_{H^3(\mathbb{R}) \cap H^{2,1}(\mathbb{R})} \leq M$ for some $M>0$, let $\varphi_{\pm}(x ; k) e^{-i k^2 x}$ and $\phi_{\pm}(x ; k) e^{i k^2x}$ be Jost functions of the spectral problem (\ref{laxn1}). Fix $k_1 \in \mathbb{C}$ satisfying $\operatorname{Im}\left(k_1^2\right)>$ 0 and denote $\varphi_-:=\varphi_-\left(\cdot ; k_1\right)=\left(\varphi_{-,1}, \varphi_{-,2}\right)^T$ and $\phi_+:=\phi_+\left(\cdot ; k_1\right)=\left(\phi_{+,1}, \phi_{+,2}\right)^T$. Then,
  \begin{align}
  &\left\|\langle x\rangle \varphi_{-,2}\right\|_{L^2(\mathbb{R})}+\left\|\langle x\rangle \partial_x \varphi_-\right\|_{L^2(\mathbb{R})}+\left\|\langle x\rangle \partial_x^2 \varphi_-\right\|_{L^2(\mathbb{R})}\nonumber\\
  &\qquad+\left\|\partial_x^3 \varphi_-\right\|_{L^2(\mathbb{R})} \leq C_M,\label{xv1}\\
  &\left\|\langle x\rangle \phi_{+, 1}\right\|_{L^2(\mathbb{R})}+\left\|\langle x\rangle \partial_x \phi_+\right\|_{L^2(\mathbb{R})}+\left\|\langle x\rangle \partial_x^2 \phi_+\right\|_{L^2(\mathbb{R})}\nonumber\\
  &\qquad+\left\|\partial_x^3 \phi_+\right\|_{L^2(\mathbb{R})} \leq C_M,\label{xv2}
  \end{align}
  where the constant $C_M$ does not depend on $u$.
  \end{proposition}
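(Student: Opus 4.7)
The plan is to exploit the Volterra integral equations for the Jost functions established via \eqref{fi5}, recast in vector form, and bootstrap from boundedness (already available from Proposition \ref{proposition1}) to weighted $L^{2}$ control, and finally to $L^{2}$ control of the three derivatives. The key structural observation is that, at a fixed $k_{1}$ with $\operatorname{Im}(k_{1}^{2})>0$, one of the two components of $\varphi_{-}$ satisfies an integral equation with exponentially decaying kernel (while the other is a simple antiderivative). Concretely, writing out the componentwise form of \eqref{laxn1}, I would use
\begin{align*}
\varphi_{-,1}(x;k_{1}) &= 1 + k_{1}\int_{-\infty}^{x} u_{y}(y)\,\varphi_{-,2}(y;k_{1})\,dy,\\
\varphi_{-,2}(x;k_{1}) &= -k_{1}\int_{-\infty}^{x} e^{2ik_{1}^{2}(x-y)}\,\overline{u_{y}(y)}\,\varphi_{-,1}(y;k_{1})\,dy,
\end{align*}
so that the kernel $e^{2ik_{1}^{2}(x-y)}$ satisfies $|e^{2ik_{1}^{2}(x-y)}| = e^{-2b(x-y)}$ with $b=\operatorname{Im}(k_{1}^{2})>0$ for $y<x$.

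The first step is the weighted estimate for $\varphi_{-,2}$. Using $\|\varphi_{-,1}\|_{L^{\infty}}\le C_{M}$ from \eqref{psib}, I bound $|\varphi_{-,2}(x)|\le C_{M}|k_{1}|\,(g\ast|u_{x}|)(x)$ with $g(x)=e^{-2bx}\chi_{x\ge 0}\in L^{1}\cap L^{2}$, and transfer the weight by the elementary inequality $\langle x\rangle\le\langle y\rangle+\langle x-y\rangle$, so that $\langle x\rangle\varphi_{-,2}$ is controlled by convolutions of $\langle y\rangle|u_{y}|$ and $|u_{y}|$ against integrable kernels of the form $e^{-2b\cdot}$ and $\langle \cdot\rangle e^{-2b\cdot}$; Young's inequality then yields the required bound in terms of $\|u_{x}\|_{L^{2,1}}\lesssim\|u\|_{H^{2,1}}\le M$.

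The second step is to differentiate the spectral equation and iterate. From $\partial_{x}\varphi_{-,1}=k_{1}u_{x}\varphi_{-,2}$ and $\partial_{x}\varphi_{-,2}=-k_{1}\overline{u_{x}}\varphi_{-,1}+2ik_{1}^{2}\varphi_{-,2}$, the weighted $L^{2}$ bound for $\partial_{x}\varphi_{-}$ reduces to $\|\langle x\rangle u_{x}\|_{L^{2}}$ (which $H^{2,1}$ provides) combined with the previous estimate for $\langle x\rangle\varphi_{-,2}$ and the $L^{\infty}$ bound for $\varphi_{-,1}$. Differentiating twice picks up $u_{xx}$ terms and quadratic products of $u_{x}$; these are controlled by $\|\langle x\rangle u_{xx}\|_{L^{2}}$ from $H^{2,1}$ together with Sobolev embedding $H^{1}(\mathbb{R})\hookrightarrow L^{\infty}$ applied to $u_{x}$. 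For the top order $\partial_{x}^{3}\varphi_{-}$ the weight is dropped, and I invoke $u_{xxx}\in L^{2}$ from $H^{3}$ to close the estimate.

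The main obstacle will be the $\partial_{x}^{3}$ estimate: after three differentiations the right-hand side contains a product $u_{xxx}\varphi_{-,j}$, and since $u_{xxx}$ is only in $L^{2}$ one must place the companion factor in $L^{\infty}$; this is justified via the already-established boundedness of $\varphi_{-}$, but cross terms such as $u_{xx}\,\partial_{x}\varphi_{-}$ and $u_{x}\,\partial_{x}^{2}\varphi_{-}$ also need to be dispatched using Sobolev embeddings together with the lower-order bounds proved in the previous step, which must be sequenced correctly to avoid a circular dependence. The estimates for $\phi_{+}=(\phi_{+,1},\phi_{+,2})^{T}$ follow by a symmetric argument: integrating \eqref{fi5} from $+\infty$ and using that $\phi_{+,2}\to 1$ while the component $\phi_{+,1}$ satisfies the integral equation with the exponentially decaying kernel $e^{2ik_{1}^{2}(y-x)}$ (for $y>x$ and $\operatorname{Im}(k_{1}^{2})>0$), one repeats the same bootstrap to obtain \eqref{xv2}.
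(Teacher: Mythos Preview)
Your proposal is correct and follows essentially the same approach as the paper: use the exponentially decaying kernel at $k_{1}$ with $\operatorname{Im}(k_{1}^{2})>0$ to place $\varphi_{-,2}$ in (weighted) $L^{2}$, then bootstrap through the spectral equation \eqref{laxn1} to control the successive derivatives. The only minor technical variation is in the weighted step---you transfer the weight via $\langle x\rangle\le\langle y\rangle+\langle x-y\rangle$ and apply Young's inequality, whereas the paper writes down an integral equation for $x\varphi_{-,2}$ and bounds its right-hand side directly; the two routes are interchangeable.
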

  \begin{proof}
It is suffice to prove the statement for $\varphi_-$, as the procedure for $\phi_+$ is analogous. From (\ref{psib}), $\varphi_-\in L^{\infty}(\mathbb{R})$ is obvious. First, our aim is to prove $\varphi_{-,2}$ belongs to $L^2(\mathbb{R})$. Under the result that the existence of Jost functions is uniformly in $k$ (for  details, please see in \cite[Proposition 3.1]{laff2}), it is convenient to prove the case for $k=k_1$. Using the integral equation for $\varphi_-$:
  \begin{equation}
  \varphi_-=e_1+K \varphi_-,
  \end{equation}
  where the operator $K$ is given by
  \begin{equation}
  K \varphi_-=k_1 \int_{-\infty}^x\begin{pmatrix}
  1 & 0 \\
  0 & e^{2ik_1^2(x-y)}
  \end{pmatrix} P_y(y) \varphi_-(y;k) dy,
  \end{equation}
  which means
  \begin{equation}
  \varphi_{-,2}(x;k)=k_1\int_{-\infty}^x
  -\bar{u}_xe^{2ik_1^2(x-y)}\varphi_{-,1}(y;k)
  dy,
  \end{equation}
  we deduce
  \begin{equation}
  \left\|\varphi_{-,2}(x;k)\right\|_{L^2\left(-\infty, x_0\right)}\leq \frac{|k_1|\|u_x\|_{L^2\left(-\infty,x_0\right)}}{4\operatorname{Im}(k_1^2)}\left\|\varphi_{-,1}\right\|_{L^{\infty}\left(-\infty, x_0\right)}.
  \end{equation}
  As $u_x \in L^2(\mathbb{R})$, we can divide $\mathbb{R}$ into finite sub-intervals such that $K$ is a contraction as shown above within each sub-interval. By gluing solutions together, we have $\varphi_{-,2}\in L^2(\mathbb{R})$ and
  \begin{equation}
  \left\|\varphi_{-,2}\right\|_{L^2(\mathbb{R})} \leq C_M\|u_x\|_{L^2(\mathbb{R})},\nonumber
  \end{equation}
  where $C_M$ does not depend on $u_x$.
  Next, it follows directly from the (\ref{laxn1}) that
  \begin{equation}
  \partial_x \varphi_{-,1}=k_1 u \varphi_{-,2} \Longrightarrow \partial_x \varphi_{-,1} \in L^2(\mathbb{R}),
  \end{equation}
  and
  \begin{equation}
  \partial_x \varphi_{-,2}=-k_1 \bar{u}_x\varphi_{-,1}+2 i k_1^2 \varphi_{-,2} \Longrightarrow \partial_x \varphi_{-,2} \in L^2(\mathbb{R}).
  \end{equation}
  Differentiating (\ref{laxn1}) once and twice, we also obtain $\partial_x^2 \varphi_-, \partial_x^3 \varphi_- \in L^2(\mathbb{R})$.

 To show $x \varphi_{-,2} \in L^2(\mathbb{R})$, we write
  \begin{equation}
  \partial_x\left(x \varphi_{-,2}\right)=\varphi_{-,2}+x \partial_x \varphi_{-,2}.
  \end{equation}
  Also, with the second component of  (\ref{laxn1}), we obtain
  \begin{equation}
  \partial_x\left(x \varphi_{-,2}\right)=2 i k_1^2 x \varphi_{-,2}+\varphi_{-,2}-kx \bar{u}_x \varphi_{-,2},
  \end{equation}
  the integral expression of which is
  \begin{equation}
  x \varphi_{-,2}(x)=\int_{-\infty}^x e^{2 i k_1^2(x-y)} \varphi_{-,2}(y) d y-k_1 \int_{-\infty}^x e^{2 i k_1^2(x-y)} y \bar{u}_y(y) \varphi_{-,2}(y) d y . \nonumber
  \end{equation}
  As each component is bounded in $L^2(\mathbb{R})$ in the right-hand side, we have $x \varphi_{-,2}\in L^2(\mathbb{R})$. Then, it follows from the system (\ref{laxn1}) and its derivative that $x \partial_x \varphi_-, x \partial_x^2 \varphi_- \in L^2(\mathbb{R})$. Combining the above results, we have the bounds (\ref{xv1}) for $\varphi_-$.
  \end{proof}

   \section{A modified Darboux transformation   }
 \label{sec:section3}

    \subsection{Construction of  Darboux transformation }
    \label{sub22}
    \hspace*{\parindent}

    We need to construct a kind of  two-fold Darboux transformation to obtain a 0-soliton solution $u^{(1)}$ from  a 1-soliton solution $u$  to remove   the zeros of $a(k)$.

 According to the characteristic  of  the Lax pair (\ref{laxn1})-(\ref{laxn2}) for the FL equation,  we   consider  a   general   two-fold Darboux transformation    expressed by
\begin{equation}
  \psi^{(1)}(x;k)  =T  (  k) \psi(x;k),\label{2dt1}
\end{equation}
where
\begin{equation}
 T(k)  =\begin{pmatrix}
    f_1&0\\
    0&f_2
  \end{pmatrix}k ^{-2}+
  \begin{pmatrix}
    0&g_1\\
    g_2&0
  \end{pmatrix}k^{-1}+
  \begin{pmatrix}
    h_1&0\\
    0&h_2
  \end{pmatrix}, \nonumber
\end{equation}
and $f_1,f_2,g_1,g_2,h_1,h_2$ are unknown functions to be determined.

Let  $\eta=(\eta_1,\eta_2)^T $ and $\xi=(\xi_1,\xi_2)^T $ be two    vector  solutions  of the Lax pair  (\ref{laxn1})-(\ref{laxn2})
associated with the spectral parameters  $k_1, \ k_2  \in \mathbb{C} \backslash\{0\} $ respectively. The two-fold Darboux transformation is characterized by a two-dimensional kernel
\begin{align}
 & T (k_1) \eta=0,\ \  T(k_2)\xi=0,
\end{align}
which lead to two  linear systems
\begin{align}
  &\begin{pmatrix}
    k_1^{-2}\eta_2&k_1^{-1}\eta_1\\[4pt]
    k_2^{-2}\xi_2&k_2^{-1}\xi_1
  \end{pmatrix}
  \begin{pmatrix}
    f_1\\[4pt] g_1
  \end{pmatrix}=
  \begin{pmatrix}
    -h_1\eta_2\\[4pt] -h_2\xi_2
  \end{pmatrix},\nonumber\\[6pt]
  &\begin{pmatrix}
    k_1^{-2}\eta_2&k_1^{-1}\eta_1\\[4pt]
    k_2^{-2}\xi_2&k_2^{-1}\xi_1
  \end{pmatrix}
  \begin{pmatrix}
    f_2\\[4pt] g_2
  \end{pmatrix}=
  \begin{pmatrix}
    -h_1\eta_1\\[4pt] -h_2\xi_1
  \end{pmatrix}. \nonumber
\end{align}
Specially, taking  $h_1=h_2=-1$,   we  obtain
\begin{align}
&f_1=\frac{k_1^2k_2\eta_2\xi_1-k_1k_2^2\eta_1\xi_2}{k_2\eta_2\xi_1-k_1\eta_1\xi_2}, \ \
 f_2=\frac{k_1^2k_2\eta_1\xi_2-k_1k_2^2\eta_2\xi_1}{k_2\eta_1\xi_2-k_1\eta_2\xi_1},\label{f2}
\end{align}
and
\begin{align}
  &g_1=\frac{k_2^2\eta_2\xi_2-k_1^2\eta_2\xi_2}{k_2\eta_2\xi_1-k_1\eta_1\xi_2}, \ \
 g_2=\frac{k_2^2\eta_1\xi_1-k_1^2\eta_1\xi_1}{k_2\eta_1\xi_2-k_1\eta_2\xi_1}.\label{g2}
  \end{align}
Substituting  (\ref{2dt1}) into the Lax pair  (\ref{laxn1})-(\ref{laxn2}) yields
\begin{align}
&u^{(1)}(x,t)=\frac{f_1}{f_2}u(x,t)+\frac{g_1}{f_2}:=D (\psi;k)u(x,t),\label{2dt2}\\
&u_x^{(1)}(x,t)=u_x(x,t).\label{2dt3}
\end{align}
Therefore, (\ref{2dt1}) together with  (\ref{2dt2})-(\ref{2dt3}) give the desired  two-fold Darboux transformation,
in which  the relation between old and new components is given by
\begin{align}
&\{ u, \psi\}\longrightarrow \{ u^{(1)}, \psi^{(1)}\}. \label{2dte3}
\end{align}

 To make  above Darboux transformation (\ref{2dte3}) be able to use for us,
 we need to make some modification to it.

  \subsection{Modification to the Darboux transformation  }
 \hspace*{\parindent}

\noindent {\bf (A) odd or subtract zeros:}

For a fixed   parameter  $k \in \mathbb{C}$,  and $\eta =\left(\eta_1, \eta_2\right)^T, \ \xi=\left(\xi_{1}, \xi_2\right)^T\ \in\mathbb{C}^2$,
we define a bilinear form
$$m_{k}(\eta, \xi):=k  \eta_{1} \bar{\xi}_{1}+\bar k \eta_{2} \bar{\xi}_{2}. $$

According to the symmetry (\ref{dcx3}) of scattering data,
if $\pm k_1$  is a zero of $a(k)$ in the first and	third quadrants,
then $  \pm \bar k_1$ are also zeros of
$  \overline{a(\bar k)}$ in the second  and	forth  quadrants.
To remove these zeros,  by  specially taking  $k_2=\bar k_1$, $\xi=\bar \eta $ in (\ref{f2})-(\ref{g2}), we rewrite them as
\begin{align}
 &f_1=|k_1|^2A_{k_1}(\eta),\ \ \
 f_2=|k_1|^2\overline{A_{k_1}}(\eta),\\
 &g_1=C_{k_1}(\eta),\ \ \
 g_1=\overline{C_{k_1}}(\eta),
\end{align}
where
\begin{align}
&A_{k_1}(\eta):=\frac{m_{\bar{k}_1}(\eta,\eta)}{m_{k_1}(\eta, \eta)},\ \ \
 C_{k_1}(\eta):=\left(k^{2}_1-\bar{k}^{2}_1\right) \frac{\eta_1 \bar{\eta}_2}{m_{k_1}(\eta, \eta)}, \ \
\end{align}
The Darboux transformation (\ref{2dt1}) and (\ref{2dt2}) become
\begin{align}
  &\psi^{(1)}(x;k)  =T(\eta, k, k_{1}) \psi(x;k),\\
  &u^{(1)}=A_{k_1}(\eta)(-A_{k_1}(\eta) u+C_{k_1}(\eta)/|k_1|^2),\label{u2bh}
  \end{align}
where the Darboux matrix is given by
\begin{align}
  &T(\eta, k, k_{1}):=\begin{pmatrix}
    |k_1|^2\overline{A_{k_1}(\eta)}k^{-2}-1 & C_{k_1}(\eta)k^{-1} \\
    \overline{C_{k_1}}(\eta)k^{-1} &  |k_1|^2A_{k_1}(\eta)k^{-2}-1
  \end{pmatrix}. \label{u2bh3}
  \end{align}

\noindent {\bf (B)  retain the same  asymptotics:}

We need to  make some modification to the  Darboux matrix (\ref{u2bh3}) such that the boundary conditions
$$ \varphi^{(1)}(x;k) \to e_1, \ \phi^{(1)}(x;k) \to e_2,\ as \ x \to \infty$$
are satisfied.

We choose $\eta=\varphi_-(x;k_1) e^{-i k_1^2x}$, then
\begin{equation}
C_{k_{1}}(\eta) \rightarrow 0, \quad A_{k_{1}}(\eta)=A_{k_{1}}\left(\varphi_-\right) \rightarrow \bar{k}_{1} / k_{1}, \ as \ x \rightarrow-\infty, \label{ajj1}
\end{equation}
and we thus have
\begin{equation}
T(\eta,k,k_1) \rightarrow T_{-\infty}(e_1,k,k_1)=\operatorname{diag}\left(-\frac{k^2-k_{1}^2}{k^2},-\frac{k^2-\bar{k}_1^2}{k^2}\right), x \rightarrow-\infty. \nonumber
\end{equation}
The definition of the new Jost functions $\varphi_-^{(1)}$ and $\phi_-^{(1)}$ are as follows
\begin{equation}
  \begin{aligned}
(\varphi_-^{(1)}, \phi_-^{(1)})&=T(\varphi_-, \phi_-) T_{-\infty}^{(1),-1}\\
&=(T ^{(1)}\varphi_-, T \phi_-)\operatorname{diag}
\left(-\frac{k^2}{k^2-k_{1}^2} , -\frac{k^2}{k^2-\bar{k}_{1}^2}
\right).\nonumber
  \end{aligned}
\end{equation}
Based on  the asymptotics (\ref{vpjj}), it is clear that
\begin{align}
&\varphi_-^{(1)}=-\frac{k^2}{k^2-k_{1}^2} T \varphi_- \sim e_{1},\quad x \rightarrow-\infty,\\
&\phi_-^{(1)}=-\frac{k^2}{k^2-\bar{k}_1^2}T \phi_- \sim e_{2},\quad x \rightarrow-\infty.
\end{align}
On the other hand, if we choose $\eta=\varphi_+(x,k) e^{i k_1^2 x}$, we notice that
\begin{equation}
C_{k_{1}}(\eta) \rightarrow 0, \quad A_{k_{1}}(\eta)=A_{k_{1}}\left(\varphi_-\right) \rightarrow k_{1} / \bar{k}_{1}, \quad x \rightarrow+\infty,
\end{equation}
thus we have
\begin{equation}
  T(k) \rightarrow T_{+\infty}(k)=\operatorname{diag}\left(-\frac{k^2-\bar{k}_1^2}{k^2},-\frac{k^2-k_1^2}{k^2}\right), \quad x \rightarrow+\infty.
\end{equation}
Then
\begin{equation}
  \begin{aligned}
(\varphi_{+}^{(1)}, \phi_+^{(1)})&=T\left(\varphi_+, \phi_+\right) T_{+\infty}^{(1),-1}\\
&=(T \varphi_+, T \phi_+)\operatorname{diag}\left(
  -\frac{k^2}{k^2-\bar{k}_1^2},-\frac{k^2}{k^2-k_{1}^2}
 \right).\nonumber
  \end{aligned}
\end{equation}
Under the asymptotics (\ref{vpjj}), we also have
\begin{align}
&\varphi_{+}^{(1)}=-\frac{k^2}{k^2-\bar{k}_{1}^2}T \varphi_+ \sim e_{1},\quad x \rightarrow+\infty, \\
&\phi_+^{(1)}=-\frac{k^2}{k^2-k_1^2}T \phi_+ \sim e_{2},\quad x \rightarrow+\infty .
\end{align}
Based on the above results, we finally modify the  Darboux matrix (\ref{u2bh3})  as follows
\begin{equation}
T(\eta, k, k_1)=-\frac{1}{k^2-k_1^2}\begin{pmatrix}
  |k_1|^2\overline{A_{k_1}(\eta)}-k^2 & C_{k_1}(\eta)k \\
  \overline{C_{k_{1}}(\eta)}k& |k_1|^2A_{k_1}(\eta)-k^2
\end{pmatrix}.\label{dt2}
\end{equation}
As $A_{k_1}(\eta)$ and $C_{k_1}(\eta)$ are bounded in $x$ for all considered choices of $\eta$, the functions $\varphi_\pm^{(1)}(x ; k)$ and $\phi_\pm^{(1)}(x ; k)$ are bounded functions of $x$ for every $k \in \mathbb{C} \backslash\left\{\pm k_1, \pm \bar{k}_1\right\}$. Also, the asymptotics of $\varphi_\pm^{(1)}(x ; k)$ and $\phi_\pm^{(1)}(x ; k)$ are
\begin{align}
  &\lim_{x\rightarrow \pm \infty}\varphi_\pm^{(1)}=T(e_1,k, k_1) e_1=e_1,\\
  &\lim_{x\rightarrow \pm \infty}\phi_\pm^{(1)}(x ; k)=T(e_2,k, k_1) e_2=e_2.
\end{align}

The invariance of $A_k$ and $C_k$ under a multiplication by a nonzero complex number is shown in the following proposition.

\begin{proposition}
  As for $x \rightarrow \pm \infty$, we note that $A_k(\eta)=A_k(\varphi_-)$ and $C_k(\eta)=C_k(\varphi_-)$ by
  \begin{align}
 &A_k(e_1)=\bar{k}/{k}, \quad A_k(e_2)=k/\bar{k}, \\
 &\overline{A_k(\eta)}=A_{\bar{k}}(\eta), \quad A_k(a \eta)=A_k(\eta),\label{asc}\\
 &C_k(e_1)=C_k(e_2)=0, \quad C_k(a \eta)=C_k(\eta),\label{csc}\\
&A_k(\sigma_3 \eta)=A_k(\eta), \quad A_k(\sigma_1 \eta)=A_{\bar{k}}(\eta),\label{asc1}\\
&C_k(\sigma_3 \eta)=-C_k(\eta), \quad C_k\left(\sigma_1 \eta\right)=\overline{C_k(\eta)}.\label{csc1}
  \end{align}
\end{proposition}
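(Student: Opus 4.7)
The plan is to verify each identity in the list by direct substitution into the defining formulas
\[
A_k(\eta)=\frac{m_{\bar k}(\eta,\eta)}{m_k(\eta,\eta)},\qquad
C_k(\eta)=(k^2-\bar k^2)\,\frac{\eta_1\bar{\eta}_2}{m_k(\eta,\eta)},
\]
after first isolating a short list of elementary properties of the sesquilinear form $m_k(\eta,\eta)=k|\eta_1|^2+\bar k|\eta_2|^2$ that do all the work. The preamble observation $A_k(\eta)=A_k(\varphi_-)$ and $C_k(\eta)=C_k(\varphi_-)$ for $\eta=\varphi_- e^{-ik^2x}$ is then an immediate consequence of the scaling-invariance identities, since $\eta$ is a nonzero scalar multiple of $\varphi_-$.

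The four properties of $m_k$ I would record first are: (i) homogeneity $m_k(a\eta,a\eta)=|a|^2 m_k(\eta,\eta)$; (ii) conjugation $\overline{m_k(\eta,\eta)}=m_{\bar k}(\eta,\eta)$; (iii) the $\sigma_3$-invariance $m_k(\sigma_3\eta,\sigma_3\eta)=m_k(\eta,\eta)$, since $\sigma_3$ only flips the sign of $\eta_2$ and $|\eta_2|^2$ is unchanged; and (iv) the $\sigma_1$-swap $m_k(\sigma_1\eta,\sigma_1\eta)=k|\eta_2|^2+\bar k|\eta_1|^2=m_{\bar k}(\eta,\eta)$. Each of these is a one-line computation directly from the definition of $m_k$.

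With these in hand the ten identities reduce to bookkeeping. The evaluations $A_k(e_1)=\bar k/k$, $A_k(e_2)=k/\bar k$ and $C_k(e_1)=C_k(e_2)=0$ follow by plugging $(1,0)^T$ or $(0,1)^T$ into the definitions. The conjugation law $\overline{A_k(\eta)}=A_{\bar k}(\eta)$ is (ii) applied separately to numerator and denominator. The scaling identities $A_k(a\eta)=A_k(\eta)$ and $C_k(a\eta)=C_k(\eta)$ are (i), noting that in $C_k$ the factor $a\bar a=|a|^2$ in the numerator cancels against the denominator. The $\sigma_3$ identities use (iii) on the denominators, with the minus sign in $C_k(\sigma_3\eta)=-C_k(\eta)$ coming from $(\sigma_3\eta)_1\overline{(\sigma_3\eta)_2}=-\eta_1\bar{\eta}_2$. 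The identity $A_k(\sigma_1\eta)=A_{\bar k}(\eta)$ follows from (iv) applied to both numerator and denominator.

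I do not anticipate a real obstacle: each line collapses to a one-line verification once the four properties of $m_k$ are in place. The only step requiring careful sign bookkeeping is $C_k(\sigma_1\eta)=\overline{C_k(\eta)}$, where two sign-related effects must be reconciled: the prefactor $k^2-\bar k^2$ is purely imaginary, so its conjugate flips sign, while the swap of components gives $(\sigma_1\eta)_1\overline{(\sigma_1\eta)_2}=\eta_2\bar\eta_1=\overline{\eta_1\bar\eta_2}$; these must combine consistently with (iv) applied to the denominator, and this is the one place where I would slow down and track the signs explicitly.
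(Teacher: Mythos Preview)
The paper states this proposition without proof, so there is no argument to compare against; your plan of direct verification via the definitions of $m_k$, $A_k$, and $C_k$ is exactly the intended (and only reasonable) approach, and your four recorded properties of $m_k$ are correct and do all the work.

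One remark on the point you flag as delicate: carrying out the computation for $C_k(\sigma_1\eta)$ gives
\[
C_k(\sigma_1\eta)=(k^2-\bar k^2)\,\frac{\eta_2\bar\eta_1}{m_{\bar k}(\eta,\eta)},
\qquad
\overline{C_k(\eta)}=-(k^2-\bar k^2)\,\frac{\eta_2\bar\eta_1}{m_{\bar k}(\eta,\eta)},
\]
so in fact $C_k(\sigma_1\eta)=-\,\overline{C_k(\eta)}$. This indicates a sign typo in the proposition as stated; your method is not at fault, and the later application in the paper (where $\sigma_1\sigma_3$ is applied together) is unaffected since the extra sign is absorbed by $C_k(\sigma_3\eta)=-C_k(\eta)$.
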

We will show the mapping  $D(\eta,k_1)$ is independent of the choice of the fundamental Jost function   $\varphi_-(x ; k_1)$, $\phi_+(x ; k_1)$, $\varphi_+(x ; \bar{k}_1)$ and $\phi_-(x ; \bar{k}_1)$.
\begin{proposition}
  Assume that $k_1 \in \mathbb{C}_I$ satisfies $a\left(k_1\right)=0$. Given a potential $u \in H^3(\mathbb{R}) \cap H^{2,1}(\mathbb{R})$, it can be seen that
\begin{equation}
\begin{aligned}
u^{(1)}(x) &=D (\varphi_-\left(x; k_1\right) e^{-ik_1^2 x}, k_1) u(x)  =D (\phi_+(x ; k_1) e^{ik_1^2 x},k_1) u(x) \\
&=D (\varphi_+(x ; \bar{k}_1) e^{-i \bar{k}_1^2 x},\bar{k}_1) u(x) =D (\phi_-(x ; \bar{k}_1) e^{i \bar{k}_1^2 x},\bar{k}_1) u(x),\label{u2fj}
\end{aligned}
\end{equation}
where the four Jost functions are the solutions to the  spectral problem (\ref{laxn1}).
\end{proposition}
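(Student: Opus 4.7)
The plan is to exploit three mechanisms: the linear dependence of two Jost solutions at a zero of $a(k)$, the Jost symmetry linking parameters $k_1$ and $\bar{k}_1$, and the scaling/Pauli invariances of the scalars $A_k,C_k$ collected in \eqref{asc}--\eqref{csc1}. Because \eqref{u2bh} writes the action of $D$ as $D(\eta,k_1)u=A_{k_1}(\eta)\bigl(-A_{k_1}(\eta)u+C_{k_1}(\eta)/|k_1|^2\bigr)$, the problem reduces to showing that all four choices of $\eta$ produce matching $(A,C)$ pairs at the relevant parameter ($k_1$ or $\bar{k}_1$). I will handle the three equalities in \eqref{u2fj} in turn.

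For the first equality, \eqref{ak} and the hypothesis $a(k_1)=0$ yield $\det\bigl(\varphi_-(x;k_1)e^{-ik_1^2 x},\phi_+(x;k_1)e^{ik_1^2 x}\bigr)=0$; since both columns solve the same first-order linear system \eqref{laxn1} at $k=k_1$, the proportionality factor is independent of $x$, so $\varphi_-(x;k_1)e^{-ik_1^2 x}=b_1\,\phi_+(x;k_1)e^{ik_1^2 x}$ for some $b_1\in\mathbb{C}\setminus\{0\}$. The scaling invariances $A_{k_1}(b_1\eta)=A_{k_1}(\eta)$ and $C_{k_1}(b_1\eta)=C_{k_1}(\eta)$ from \eqref{asc}--\eqref{csc} immediately equate the first two expressions. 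For the third equality, the Jost symmetry \eqref{dcx} written column-wise gives $\phi_-(x;\bar{k}_1)=-i\sigma_2\overline{\varphi_-(x;k_1)}$ and $\varphi_+(x;\bar{k}_1)=i\sigma_2\overline{\phi_+(x;k_1)}$; substituting $\varphi_-=b_1\phi_+$ (from the first equality) into the former yields $\phi_-(x;\bar{k}_1)e^{i\bar{k}_1^2 x}=-\bar{b}_1\,\varphi_+(x;\bar{k}_1)e^{-i\bar{k}_1^2 x}$, and a second application of scaling invariance collapses the third and fourth expressions.

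The bridge between parameter $k_1$ and parameter $\bar{k}_1$ is the heart of the argument. Set $\eta:=\phi_+(x;k_1)e^{ik_1^2 x}$ and $\tilde\eta:=\varphi_+(x;\bar{k}_1)e^{-i\bar{k}_1^2 x}$; by the symmetry used above, $\tilde\eta=i\sigma_2\bar\eta$. Writing $i\sigma_2=\sigma_3\sigma_1$, I would chain the identities \eqref{asc1}--\eqref{csc1} together with the complex-conjugation rules $\overline{A_k(\eta)}=A_{\bar k}(\eta)$ and its direct counterpart for $C_k$ (both readable off the definitions of $m_k$, $A_k$, $C_k$) to obtain $A_{\bar{k}_1}(\tilde\eta)=A_{k_1}(\eta)$ and $C_{\bar{k}_1}(\tilde\eta)=C_{k_1}(\eta)$. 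Since $|\bar{k}_1|^2=|k_1|^2$, formula \eqref{u2bh} then gives $D(\tilde\eta,\bar{k}_1)u=D(\eta,k_1)u$, closing the chain. The main obstacle is the sign and conjugation bookkeeping: each of the three tools (swap $\eta\leftrightarrow\bar\eta$, swap $k\leftrightarrow\bar k$, act by $\sigma_1$ or $\sigma_3$) can introduce a sign into the quadratic expression for $u^{(1)}$, and these signs must cancel consistently. A useful safety net is to bypass the invariance identities altogether and substitute $\tilde\eta_1=\bar\eta_2$, $\tilde\eta_2=-\bar\eta_1$ directly into the definitions of $m_{\bar k_1}$, $A_{\bar{k}_1}$ and $C_{\bar{k}_1}$, whence both equalities fall out in one line.
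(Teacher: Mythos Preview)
Your proposal is correct and follows essentially the same approach as the paper: both reduce the four-way equality to matching the pairs $(A_k,C_k)$ via (i) the linear dependence $\varphi_-e^{-ik_1^2x}=\gamma\,\phi_+e^{ik_1^2x}$ at the zero $k_1$ together with scaling invariance \eqref{asc}--\eqref{csc}, and (ii) the Jost symmetry \eqref{dcx} combined with the Pauli identities \eqref{asc1}--\eqref{csc1} to bridge $k_1$ and $\bar k_1$. The only cosmetic difference is that the paper builds the $k_1\leftrightarrow\bar k_1$ bridge between the first and fourth expressions (via $\sigma_1\sigma_3\varphi_-(\cdot;k_1)=\overline{\phi_-(\cdot;\bar k_1)}$), whereas you build it between the second and third (via $\varphi_+(\cdot;\bar k_1)=\sigma_3\sigma_1\overline{\phi_+(\cdot;k_1)}$); your direct-substitution safety net for $A_{\bar k_1}(\tilde\eta)$ and $C_{\bar k_1}(\tilde\eta)$ is a sensible way to sidestep the sign bookkeeping.
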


\begin{proof}
We mainly focus on prove the identity of the four equations in (\ref{u2fj}).
The first one is obtained by setting $\eta$ as $\varphi_-(x ; k_1) e^{-ik_1^2 x}$. Next, combining (\ref{asc}), (\ref{csc}) and $a(k_1)=0$, we apply the linear relation between $\varphi_-(x ; k_1) e^{-ik_1^2 x}$ and $\phi_+(x ; k_1) e^{ik_1^2 x}$ to obtain the second equation. Then, we use the symmetry relation (\ref{dcx}) and properties (\ref{asc1})-(\ref{csc1}) to obtain
\begin{equation}
A_{k_1}(\varphi_-(x ; k_1))=A_{\bar{k}_1}(\sigma_1 \sigma_3 \varphi_-(x ;k_1))=A_{\bar{k}_1}(\overline{\phi_-(x ; \bar{k}_1)})=A_{\bar{k}_1}(\phi_-(x ; \bar{k}_1)),\nonumber
\end{equation}
and
\begin{equation}
C_{k_1}(\varphi_-(x ; k_1))=-\overline{C_{k_1}(\sigma_1 \sigma_3 \varphi_-(x ; k_1)}=-\overline{C_{k_1}(\overline{\phi_-(x ; \bar{k}_1)})}=C_{\bar{k}_1}(\phi_-(x ; \bar{k}_1)).\nonumber
\end{equation}
By invoking the transformation formula (\ref{u2bh}), we prove the third equation. Finally, we derive the fourth equation by
\begin{equation}
\varphi_+(x ; \bar{k}_1) e^{-i \bar{k}_1^2 x}=\overline{\gamma}\phi_-(x ; \bar{k}_1) e^{i \bar{k}_1^2 x}
\end{equation}
as $\overline{a(\bar{k}_1)}=0$.
\end{proof}

After imposing the transformation (\ref{dt2}) on the Jost functions $\varphi_-(x;k)$, $\phi_-(x;k)$, $\varphi_+(x;k)$, and $\phi_+(x;k)$, for $k \in$ $\mathbb{C} \backslash\left\{\pm k_1, \pm \bar{k}_1\right\}$, the new Jost functions becomes
\begin{align}
&\varphi_-^{(1)}(x ; k)=T(\varphi_-\left(x ; k_1\right) e^{-i k_1^2 x}, k, k_1) \varphi_-(x ; k),\label{vx1}\\
&\varphi_{+}^{(1)}(x ; k)=T(\varphi_+(x ; \bar{k}_1) e^{-i \bar{k}_1^2 x}, k, \bar{k}_1) \varphi_+(x ; k),\label{vx2}\\
&\phi_+^{(1)}(x ; k)=T(\phi_+(x ; k_1) e^{i k_1^2 x}, k, k_1) \phi_+(x ; k),\label{vx3}\\
&\phi_-^{(1)}(x ; k)=T(\phi_-(x;\bar{k}_1) e^{i \bar{k}_1^2 x}, k, \bar{k}_1) \phi_-(x;k).\label{vx4}
\end{align}

 Therefore, $\left\{\varphi_\pm^{(1)}(x ; k) e^{-i k^2 x}, \phi_\pm^{(1)}(x ; k) e^{i k^2 x}\right\}$ are Jost functions of the  spectral problem (\ref{laxn1}) associated with the potential $u^{(1)}=D (\eta; k_1) u$.

 \subsection{Invertibility of the Darboux transformation}
 \label{sub23}
  \hspace*{\parindent}

Next, we construct the left inverse of the transformation $D (\eta,k_1)$, which proves the invertibility of the Darboux transformation and finally builds the one-to-one correspondence between the $1$-soliton solution and the $0$-soliton solution.
 \begin{proposition}
 The left inverse for the mapping $D (\eta,k_1)$ exists.
 \end{proposition}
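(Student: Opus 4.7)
The plan is to exhibit an explicit left inverse by applying another modified two-fold Darboux transformation of the same form (\ref{dt2}) to $u^{(1)}$, this time using an eigenfunction $\eta^{(1)}$ of the transformed Lax operator at the same spectral parameter $k_1$, and then verifying that the composition returns $u$. The key algebraic fact is that the Darboux matrix $T(\eta,k,k_1)$ is generically invertible as a $2\times 2$ matrix in $k$, which follows from a direct determinant computation.

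To make this precise I would first compute $\det T(\eta,k,k_1)$. Since $T$ intertwines the spectral problem (\ref{laxn1}) between $u$ and $u^{(1)}$, a standard computation (the Lax coefficients are traceless) shows that $\det T$ is independent of $x$; evaluating at $x\to-\infty$ with $\eta=\varphi_-(\cdot;k_1)e^{-ik_1^2 x}$ and using the diagonal limit recorded just before (\ref{dt2}) yields
\[
\det T(\eta,k,k_1)=\frac{(k^2-k_1^2)(k^2-\bar k_1^2)}{k^4}.
\]
Hence $T^{-1}$ exists as a matrix-valued rational function of $k$ for every $k\in\mathbb{C}\setminus\{0,\pm k_1,\pm\bar k_1\}$, is given explicitly by the adjugate over the determinant, and is regular in $x\in\mathbb{R}$ because the entries of $T$ are.

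Next I would identify this matrix inverse with a Darboux matrix of the same form, built from an eigenfunction of the new Lax operator. The natural candidate is $\eta^{(1)}$ obtained as the finite limit of $T(\eta,k,k_1)\tilde\eta(x;k)$ as $k\to k_1$, where $\tilde\eta(x;k)$ is a second, linearly independent local solution of (\ref{laxn1}) at $k=k_1$ for the original potential $u$; the limit is finite because the factor $1/(k^2-k_1^2)$ in $T$ is cancelled by the vanishing of the numerator at $k=k_1$. Using the invariances (\ref{asc})--(\ref{csc1}) together with the linear dependence $\varphi_-(\cdot;k_1)=\gamma\phi_+(\cdot;k_1)$ forced by $a(k_1)=0$, one checks that $\eta^{(1)}$ indeed solves the spectral problem for $u^{(1)}$ at spectral parameter $k_1$. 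The required algebraic identity $T(\eta^{(1)},k,k_1)\,T(\eta,k,k_1)=I$ then follows by matching: both sides are matrix-valued rational functions of $k$ with determinant $1$, identical $I+\mathcal{O}(k^{-1})$ behavior as $k\to\infty$, and cancelling residues at $\pm k_1$ and $\pm\bar k_1$.

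Finally I would transfer this matrix identity to the potential level. Because the reconstruction formula (\ref{u2bh}) is fixed once the Darboux matrix is chosen, the matrix identity forces $D(\eta^{(1)},k_1)\circ D(\eta,k_1)u=u$, producing the desired left inverse. The principal technical obstacle is verifying the regularity of $\eta^{(1)}$, namely that the bilinear form $m_{k_1}(\eta^{(1)},\eta^{(1)})$ does not vanish on $\mathbb{R}$, for otherwise $A_{k_1}(\eta^{(1)})$ and $C_{k_1}(\eta^{(1)})$ would develop singularities and the composition would leave $H^3(\mathbb{R})\cap H^{2,1}(\mathbb{R})$. I would address this by tracing $\eta^{(1)}$ back to a specific Jost function of $u^{(1)}$ at $k_1$ (a properly normalized image under $T$ of $\phi_+$ or $\varphi_-$), whose boundedness and non-vanishing on $\mathbb{R}$ follow from Propositions \ref{proposition1} and \ref{p52}.
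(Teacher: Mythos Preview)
Your approach has a genuine gap at the matrix identity step. You claim that $T(\eta^{(1)},k,k_1)\,T(\eta,k,k_1)=I$ with both sides having determinant $1$, but this cannot hold: your own limit as $x\to-\infty$ gives $\det T(\eta,k,k_1)=\dfrac{(k^2-k_1^2)(k^2-\bar k_1^2)}{k^4}$ for the matrix (\ref{u2bh3}), or $\dfrac{k^2-\bar k_1^2}{k^2-k_1^2}$ for the normalized version (\ref{dt2}). Neither is identically $1$, so the product of two such Darboux matrices at the \emph{same} spectral parameter $k_1$ has determinant $(\det T)^2\neq 1$ and cannot equal the identity. Equivalently, the matrix inverse $T(\eta,k,k_1)^{-1}$ has its zeros and poles at $\pm k_1,\pm\bar k_1$ interchanged relative to $T$, so it is not of the form $T(\,\cdot\,,k,k_1)$ for any choice of vector; at best it has the pole structure of $T(\,\cdot\,,k,\bar k_1)$. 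Your residue-matching argument therefore collapses before it starts.

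The paper sidesteps this obstruction by working entirely at the potential level rather than the matrix level. It composes $D(\widetilde\eta,k_1)\circ D(\eta,k_1)$ through the scalar formula (\ref{u2bh}), derives the algebraic constraints $A_{k_1}(\widetilde\eta)^2A_{k_1}(\eta)^2=1$ and $C_{k_1}(\widetilde\eta)=A_{k_1}(\widetilde\eta)A_{k_1}(\eta)C_{k_1}(\eta)$, rules out one of the two resulting sign branches by an asymptotic argument, and then solves the remaining branch explicitly to obtain $\widetilde\eta=(n_1\bar\eta_2,n_2\bar\eta_1)^T$ with $|n_1|=|n_2|$; the precise $n_1,n_2$ are fixed by requiring $\widetilde\eta$ to satisfy the transformed spectral problem with potential $u^{(1)}$. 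The key conceptual point you are missing is that $D(\widetilde\eta,k_1)\circ D(\eta,k_1)=\mathrm{id}$ on potentials does \emph{not} force the corresponding Darboux matrices to multiply to $I$. If you want to rescue a matrix-level argument, you would have to show instead that the product $T(\eta^{(1)},k,k_1)T(\eta,k,k_1)$ is a nontrivial $k$-dependent matrix that nevertheless induces the identity on the potential through the reconstruction formula --- a different and more delicate statement than the one you asserted.
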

 \begin{proof}
  If we assume the left inverse of $D (\eta;k_1)$ is $D (\widetilde{\eta};k_1)$, then we can construct the following equation
  \begin{align}
   &\widetilde{u}=D (\widetilde{\eta};k_1) D (\eta;k_1)u\nonumber\\
  & =A_{k_1}(\widetilde{\eta})^2 A_{k_1}(\eta)^2 u+A_{k_1}(\widetilde{\eta})\left[-A_{k_1}(\widetilde{\eta}) A_{k_1}(\eta) C_{k_1}(\eta)+C_{k_1}(\widetilde{\eta})\right],
   \end{align}
   where $\widetilde{\eta}$ satisfies
   \begin{equation}
   A_{k_1}(\widetilde{\eta})^2 A_{k_1}(\eta)^2=1,\label{ak1t}
   \end{equation}
   and
  \begin{equation}
   -A_{k_1}(\widetilde{\eta}) A_{k_1}(\eta) C_{k_1}(\eta)+C_{k_1}(\widetilde{\eta})=0.\label{ak2t}
  \end{equation}
 With further analysis, (\ref{ak1t}) and (\ref{ak2t}) are decomposed into one of the following forms:
   \begin{equation}
   \overline{A_{k_1}\left(\widetilde{\eta}\right)}=A_{k_1}(\eta), \quad C_{k_1}\left(\widetilde{\eta}\right)=C_{k_1}(\eta),\label{at1}
   \end{equation}
   or
   \begin{equation}
   \overline{A_{k_1}\left(\widetilde{\eta}\right)}=-A_{k_1}(\eta), \quad C_{k_1}\left(\widetilde{\eta}\right)=-C_{k_1}(\eta) .\label{at2}
   \end{equation}
   Next, we proceed to decide which of the two forms would yield the existence of the left inverse. Based on (\ref{ajj1}), we realize that
   \begin{equation}
   \lim_{x \rightarrow-\infty}|\widetilde{\eta}_1| / |\widetilde{\eta}_2| \neq 0,\label{e1be2}
   \end{equation}
   as it contradicts with the first equation in (\ref{ak2t}) with $k_1 \neq 0$. According to the second term in (\ref{ak2t}), we derive $C_{k_1}(\widetilde{\eta}) \rightarrow 0$ as $x \rightarrow-\infty$ because $C_{k_1}(\eta) \rightarrow 0$ as $x \rightarrow-\infty$. Also, from (\ref{e1be2}), we have
   \begin{equation}
\lim_{x \rightarrow-\infty} C_{k_1}(\widetilde{\eta})= 0\Rightarrow  \lim_{x \rightarrow-\infty}|\widetilde{\eta}_2| /|\widetilde{\eta}_1|= 0,
   \end{equation}
which implies
\begin{equation}
 \lim_{x \rightarrow-\infty}\overline{A_{k_1}(\widetilde{\eta})} =k_1 / \bar{k}_1. \nonumber
\end{equation}
 Meanwhile, from the first equation in (\ref{ak2t}), as $k_1 \in \mathbb{C}_I$, we obtain $\operatorname{Re}(k_1^2)=0$ with $\arg (k_1)=\pi / 4$. Thus, we have $k_1=\left|k_1\right| e^{i \pi / 4}$ and use the first equation in (\ref{ak2t}) to obtain
 \begin{equation}
 |\widetilde{\eta}_1|^2|\eta_2|^2+|\widetilde{\eta}_2|^2|\eta_1|^2=0.\nonumber
 \end{equation}
 This indicates $\widetilde{\eta}=0$ which contradicts with $\widetilde{\eta}\neq 0$. Consequently, we conclude that the choice (\ref{at1}) is reasonable.

 Therefore, we use the choice (\ref{at2}) to define $\widetilde{\eta}$ and satisfy the system (\ref{ak1t})-(\ref{ak2t}). As $k_1 \in \mathbb{C}_I$, the condition $\overline{A_{k_1}(\widetilde{\eta})}=A_{k_1}(\eta)$ is equivalently written as
 \begin{equation}
 \left|\eta_1\right|^2|\widetilde{\eta}_1|^2=\left|\eta_2\right|^2|\widetilde{\eta}_2|^2.
 \end{equation}
 Hence, there exists a positive number $n$ such that
 \begin{equation}
 |\widetilde{\eta}_1|=n|\eta_2|, \quad |\widetilde{\eta}_2|=n|\eta_1|.\label{eg1}
 \end{equation}
 In addition, the condition $C_{k_1}(\widetilde{\eta})=C_{k_1}(\eta)$ yields
 \begin{equation}
 \frac{\eta_1 \bar{\eta}_2}{\widetilde{\eta}_1 \widetilde{\bar{\eta}}_2}=\frac{k_1|\eta_1|^2+\bar{k}_1|\eta_2|^2}{k_1|\widetilde{\eta}_1|^2+\bar{k}_1|\widetilde{\eta}_2|^2}, \label{ebz}
 \end{equation}
By substituting (\ref{eg1}) into (\ref{ebz}), it is reformulated as
 \begin{equation}
 n^2 \frac{\eta_1 \bar{\eta}_2}{\widetilde{\eta}_1 \widetilde{\bar{\eta}}_2}=\frac{k_1|\eta_1|^2+\bar{k}_1|\eta_2|^2}{k_1|\eta_2|^2+\bar{k}_1|\eta_1|^2},\label{eg2}
 \end{equation}
 where the right-hand side is of modulus one. Combining (\ref{eg1}) and (\ref{eg2}), the most general solution of  (\ref{at1}) is as follows
 \begin{equation}
 \widetilde{\eta}_1=n_1 \bar{\eta}_2, \quad \widetilde{\eta}_2=n_2 \bar{\eta}_1,\label{eg3}
 \end{equation}
 where $n_1, n_2 \in \mathbb{C}$ meet the condition $\left|n_1\right|=\left|n_2\right|$. Therefore, $D (\widetilde{\eta}, k_1)$ with $\widetilde{\eta}$ given by (\ref{eg3}) is the left inverse of the transformation $D (\eta, k_1)$.

 Then, we will prove a unique choice for the function $\widetilde{\eta}$ can be given by $\overline{\eta}$. As $\eta=(\eta_1,\eta_2)^T$ is the Jost function of the FL spectral problem (\ref{laxn1}) for $k=k_1$, $\widetilde{\eta}=(\bar{\eta}_2,\bar{\eta}_1)^T$ is accordingly the Jost function of the FL spectral problem (\ref{laxn1}) for $k=-\bar{k}_1$. Thus, we have the following equation of $\widetilde{\eta}$
 \begin{equation}
 \partial_x \widetilde{\eta}=-(i \bar{k}_1^2 \sigma_3+\bar{k}_1 P_x) \widetilde{\eta}.\label{eml}
 \end{equation}
Rewriting the equation as elements of the $\eta$, we have
 \begin{align}
 &\partial_x \bar{\eta}_1=i\bar{k}_1^2\bar{\eta}_1+\bar{k}_1\bar{u}_x\bar{\eta}_2,\nonumber\\
 &\partial_x \bar{\eta}_2=-i\bar{k}_1^2\bar{\eta}_2-\bar{k}_1\bar{u}_x\bar{\eta}_1.\nonumber
 \end{align}
With (\ref{eg3}), we deduce the equation of $\widetilde{\eta}$ as follows
 \begin{equation}
 \partial_x \widetilde{\eta}_1=\partial_x n_1\bar{\eta}_2+n_1\partial_x\bar{\eta}_2=(\partial_x n_1-i\bar{k}_1^2n_1)\bar{\eta}_2-\bar{k}_1u_xn_1\bar{\eta}_1.\label{x1}
 \end{equation}
On the other hand, if $\widetilde{\eta}$ is the solution of the FL spectral problem (\ref{laxn1}) associated with the potential $u_x^{(1)}=u_x$ and $k=k_1$, we have
  \begin{equation}
   \partial_x \widetilde{\eta}_1=-ik_1^2n_1\bar{\eta}_2+k_1u^{(1)}n_2\bar{\eta}_1.\label{x2}
  \end{equation}
 Comparing (\ref{x1}) with (\ref{x2}), we finally reach the specific values of $n_{1,2}$
 \begin{equation}
 n_1=e^{i(\bar{k}_1^2-k_1^2)x},\quad n_2=\frac{\bar{k}_1}{k_1}e^{i(\bar{k}_1^2-k_1^2)x}, \nonumber
 \end{equation}
 which give
 \begin{equation}
 \widetilde{\eta}_1=e^{i(\bar{k}_1^2-k_1^2)x}\bar{\eta}_2,\quad \widetilde{\eta}_2=\frac{\bar{k}_1}{k_1}e^{i(\bar{k}_1^2-k_1^2)x}\bar{\eta}_1.\label{xjt}
 \end{equation}
 Combining the above conditions,  we finally obtain $\widetilde{\eta}$, which satisfies the FL spectral problem (\ref{laxn1}) with the potential $u_x^{(1)}$ and  $k=k_1$.
 \end{proof}

  \section{Action of the  Darboux transformation  }
\label{sec3}
 \subsection{Zeros of the new scattering data }
 \label{sec4}
 \hspace*{\parindent}

 Let $\mathcal{Z}_N$ be a space defined by  (\ref{spacef}). If there exists  $k_1 \in \mathbb{C}_I$ satisfies $a(k_1)=0$, then  $u$ belongs to $\mathcal{Z}_1 \subset H^3(\mathbb{R}) \cap H^{2,1}(\mathbb{R})$. In the following, we prove, under the Darboux transformation, $u^{(1)}$ belongs to $\mathcal{Z}_0 \subset H^3(\mathbb{R}) \cap H^{2,1}(\mathbb{R})$ with $a^{(1)}(k_1)\neq 0$.
\begin{proposition}
  \label{l41}
Let $u \in \mathcal{Z}_1$ and $k_1 \in \mathbb{C}_I$ such that $a(k_1)=0$. If $\eta(x)=\varphi_-(x ; k_1) e^{-i k_1^2 x}$, where $\varphi_-$ is the Jost function of the spectral problem (\ref{laxn1}), then $u^{(1)}=D ( \eta,k_1) u$ belongs to $\mathcal{Z}_0$.
\end{proposition}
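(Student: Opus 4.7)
The plan is to establish two conclusions: (i) $u^{(1)}\in H^3(\mathbb{R})\cap H^{2,1}(\mathbb{R})$, and (ii) the new scattering coefficient $a^{(1)}(k)$ is nonzero throughout $\mathbb{C}_I$. I will exploit the explicit formula $u^{(1)} = -A_{k_1}(\eta)^2 u + A_{k_1}(\eta)C_{k_1}(\eta)/|k_1|^2$ from (\ref{u2bh}), together with two pointwise identities valid because $k_1=|k_1|e^{i\pi/4}$: first, $|A_{k_1}(\eta)|\equiv 1$ (a direct computation from $A_{k_1}=m_{\bar k_1}/m_{k_1}$), and second, $|m_{k_1}(\eta,\eta)|^2=|k_1|^2(|\eta_1|^4+|\eta_2|^4)$, which gives $|C_{k_1}(\eta)|\leq\sqrt{2}\,|k_1|$ uniformly in $x$.

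For (i), the identity $u_x^{(1)}=u_x$ in (\ref{2dt3}), together with its differentiated versions $\partial_x^j u^{(1)}=\partial_x^j u$ for $j\geq 1$, reduces every $L^2$ or weighted $L^2$ norm of a positive-order derivative of $u^{(1)}$ to the corresponding norm of $u$, which is finite by hypothesis. What remains is to control $u^{(1)}$ itself in $L^2$ and $L^{2,1}$; by $|A_{k_1}|\equiv 1$ and the triangle inequality, this reduces to the claim $C_{k_1}(\eta)\in L^2(\mathbb{R})\cap L^{2,1}(\mathbb{R})$.

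I verify this claim by decomposing $\mathbb{R}$ into three pieces. Near $x=-\infty$, the common exponential factor $e^{-ik_1^2 x}$ in $\eta=\varphi_-(\cdot;k_1)e^{-ik_1^2 x}$ cancels between the numerator $\eta_1\bar\eta_2$ and the denominator $m_{k_1}(\eta,\eta)$ of $C_{k_1}(\eta)$; because $|\varphi_{-,1}|\to 1$, the denominator stays bounded below, so $|C_{k_1}(x)|\lesssim|\varphi_{-,2}(x)|$, and Proposition \ref{p52} supplies $\langle x\rangle\varphi_{-,2}\in L^2(\mathbb{R})$. Near $x=+\infty$, the hypothesis $a(k_1)=0$ forces $\varphi_-(\cdot;k_1)e^{-ik_1^2 x}=\gamma\,\phi_+(\cdot;k_1)e^{ik_1^2 x}$ for some nonzero constant $\gamma$; rewriting $\eta$ via $\phi_+$ and cancelling the common exponentials, the denominator is bounded below by $|\phi_{+,2}|\to 1$, so $|C_{k_1}(x)|\lesssim|\phi_{+,1}(x)|$, and $\langle x\rangle\phi_{+,1}\in L^2(\mathbb{R})$ by Proposition \ref{p52}. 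On a bounded interval in between, $C_{k_1}(\eta)$ is continuous and bounded, hence trivially in $L^{2,1}$.

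For (ii), the invariance (\ref{u2fj}) ensures that a single Darboux matrix $T(\eta,k,k_1)$ transforms both $\varphi_-$ and $\phi_+$, so
\begin{equation}
a^{(1)}(k) = \det\bigl(\varphi_-^{(1)}e^{-ik^2 x},\,\phi_+^{(1)}e^{ik^2 x}\bigr) = \det T(\eta(x),k,k_1)\cdot a(k). \nonumber
\end{equation}
The intertwining relation $T_x + ik^2[\sigma_3,T] = k(P_x^{(1)}T - TP_x)$, combined with the vanishing of $\operatorname{tr}(P_x)$, $\operatorname{tr}(P_x^{(1)})$, and $\operatorname{tr}(T^{-1}[\sigma_3,T])$, yields $\partial_x\log\det T\equiv 0$ along the Lax trajectory $\eta(x)$, so $\det T$ is constant in $x$. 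Evaluating at $x\to-\infty$, where $A_{k_1}(\eta)\to\bar k_1/k_1$ and $C_{k_1}(\eta)\to 0$, produces the Blaschke-like identity
\begin{equation}
a^{(1)}(k) = \frac{k^2-\bar k_1^2}{k^2-k_1^2}\,a(k). \nonumber
\end{equation}
Since the only zero of $a$ in $\mathbb{C}_I$ sits at $k_1$ and the prefactor has its own zeros at $\pm\bar k_1\notin\mathbb{C}_I$, the prefactor exactly cancels the zero at $k_1$ without introducing any new ones in $\mathbb{C}_I$, giving $a^{(1)}(k)\neq 0$ for every $k\in\mathbb{C}_I$ and hence $u^{(1)}\in\mathcal{Z}_0$. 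The principal technical obstacle is the uniform-in-$x$ decay analysis of $C_{k_1}(\eta)$: the eigenvalue condition $a(k_1)=0$ is used in an essential way to turn the ostensibly non-decaying ratio into a function controlled by the weighted Jost bounds of Proposition \ref{p52}.
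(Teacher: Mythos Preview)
Your part (ii) mirrors the paper's proof: both compute $a^{(1)}(k)=\det T\cdot a(k)$ after noting that the hypothesis $a(k_1)=0$ forces a \emph{single} Darboux matrix to act on both $\varphi_-$ and $\phi_+$. The paper evaluates $\det T$ by direct calculation; you instead argue $\partial_x\log\det T\equiv 0$ via the intertwining relation and take $x\to-\infty$, obtaining the Blaschke factor $(k^2-\bar k_1^{\,2})/(k^2-k_1^{\,2})$ (the paper's printed $(k^2-\bar k_1^{\,2})/k^2$ appears to be a misprint --- your formula is the one that actually cancels the simple zero at $k_1$). The paper's proof of Proposition~\ref{l41} stops there; the membership $u^{(1)}\in H^3\cap H^{2,1}$ is not argued inside this proposition but is deferred to Proposition~\ref{53}. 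Your part (i) is therefore additional relative to the paper's proof of the stated result. In it you take a shortcut that Proposition~\ref{53} does not: you invoke $u_x^{(1)}=u_x$ from (\ref{2dt3}) to collapse all positive-order derivative norms onto those of $u$, whereas the paper estimates each $\partial_x^j u^{(1)}$ directly from the formula (\ref{u2bh}) together with the Jost bounds (\ref{xv1})--(\ref{xv2}). For the remaining task $C_{k_1}(\eta)\in L^{2,1}$, your near-$\pm\infty$ decomposition --- switching to the $\phi_+$ representation via $a(k_1)=0$ on the right half-line --- is exactly the mechanism the paper uses in Proposition~\ref{53}.

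One slip: the assertion $k_1=|k_1|e^{i\pi/4}$ is not a hypothesis and is generally false for $k_1\in\mathbb{C}_I$. Fortunately $|A_{k_1}(\eta)|\equiv 1$ holds for every $k_1$ (because $m_{\bar k_1}(\eta,\eta)=\overline{m_{k_1}(\eta,\eta)}$), so your use of that identity is unaffected; the formula $|m_{k_1}|^2=|k_1|^2(|\eta_1|^4+|\eta_2|^4)$ and the resulting uniform bound $|C_{k_1}|\le\sqrt{2}\,|k_1|$ genuinely require $\mathrm{Re}(k_1^2)=0$, but you never actually invoke them downstream, so the slip is harmless.
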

\begin{proof}
 We show that if  $a(k)$  has a  simple zero   $k=k_1$,   then after the Darboux transformation,
 $$a^{(1)}(k):=\text{det}\left(\varphi_-^{(1)}(\cdot ; k), \phi_+^{(1)}(\cdot ; k)\right)$$
  has no zero in $\mathbb{C}_I$, where $\varphi_-^{(1)}$ and $\phi_+^{(1)}$ are given by (\ref{vx1}) and (\ref{vx3}). Explicit calculation gives
\begin{equation}
  \begin{aligned}
  &a^{(1)}(k)=\text{det}(\varphi_-^{(1)}(x ; k), \phi_+^{(1)}(x;k))\\
  &= \text{det}\left(T\left[\varphi_-\left(x ; k_1\right) e^{-i k_1^2 x}, k, k_1\right] \varphi_--T\left[\phi_+\left(x ;k_1\right) e^{i k_1^2 x}, k, k_1\right] \phi_+\right) \\
  &= \text{det}\left(T\left[\varphi_-\left(x ; k_1\right), k, k_1\right] \varphi_-(x ; k)-T\left[\varphi_-\left(x ; k_1\right), k, k_1\right] \phi_+(x ; k)\right) \\
  &=-a(k) \operatorname{det}\left(T\left[\varphi_-\left(x ; k_1\right), k, k_1\right]\right)= \frac{k^2-\bar{k}_1^2}{k^2}a(k). \nonumber
  \end{aligned}
  \end{equation}
 As $k_1$ is the only simple zero of $a(k)$ in $\mathbb{C}_I$, $a^{(1)}(k)$ has no zeros for $k$ in $\mathbb{C}_I$.

We also calculate the transformation from $b(k)$ to $b^{(1)}(k)$ as follows:
\begin{equation}
\begin{aligned}
b^{(1)}(k) &=\text{det}\left(e^{-2 i k^2 x} \varphi_{+}^{(1)}(x ; k), \varphi_-^{(1)}(x ; k)\right) \\
&=\text{det}\left(e^{-2 i k^2 x} \varphi_{+}^{(1)}(x ; k), T\left[\varphi_-\left(x ; k_1\right) e^{-i k_1^2 x}, k, k_1\right] \varphi_-(x ; k)\right) \\
&=b(k) \text{det}\left(e_1, T\left[e_2, k, k_1\right] e_2\right)   =-b(k).\nonumber
\end{aligned}
\end{equation}
$b(k)$ adds no new zeros and singularities.
\end{proof}

\subsection{Smoothness  of   new Jost functions}
 \hspace*{\parindent}

\begin{proposition}
  Let $\varphi_{\pm}^{(1)}(x ; k)$ and $\phi_{\pm}^{(1)}(x ; k)$ as defined by (\ref{vx1})-(\ref{vx4}). Then, $k=\pm k_1$ and $k=\pm \bar{k}_1$ are removable singularities in the corresponding domains of analyticity of $\varphi_{\pm}^{(1)}(x ; k)$ and $\phi_{\pm}^{(1)}(x ; k)$.
  \end{proposition}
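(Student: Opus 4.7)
The plan is to verify removability by clearing the prefactor $-1/(k^{2}-k_{1}^{2})$ in the modified Darboux matrix (\ref{dt2}) (or $-1/(k^{2}-\bar{k}_{1}^{2})$ when the base parameter is $\bar{k}_{1}$) and checking that the resulting numerator vanishes at each suspect point. The key observation is that $T(\eta,k,k_{1})$ was engineered so as to annihilate the chosen vector $\eta$ at the base parameter $k_{1}$, and that each of the four new Jost functions, evaluated at its respective singular point, coincides up to a nonzero scalar exponential either with that kernel vector or with its image under the symmetry $k\mapsto -k$.

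Consider first $\varphi_{-}^{(1)}(x;k)=T(\eta,k,k_{1})\varphi_{-}(x;k)$ on $D_{+}$ with $\eta(x)=\varphi_{-}(x;k_{1})e^{-ik_{1}^{2}x}$; its only suspect points in $D_{+}$ are $k=\pm k_{1}$ (the points $\pm\bar{k}_{1}$ lie in $D_{-}$). Define
\[M(k):=(k^{2}-k_{1}^{2})T(\eta,k,k_{1}),\]
which is a polynomial of degree two in $k$, hence entire, so $\varphi_{-}^{(1)}$ is analytic at $k_{0}\in\{\pm k_{1}\}$ precisely when $M(k_{0})\varphi_{-}(x;k_{0})=0$. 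Comparing (\ref{u2bh3}) with (\ref{dt2}), the unmodified and modified matrices differ by the scalar factor $-k^{2}/(k^{2}-k_{1}^{2})$, which gives $M(k)=-k^{2}T_{\mathrm{old}}(\eta,k,k_{1})$. At $k_{0}=k_{1}$ the required vanishing then reduces, via $\varphi_{-}(x;k_{1})=e^{ik_{1}^{2}x}\eta$, to the kernel identity $T_{\mathrm{old}}(\eta,k_{1},k_{1})\eta=0$ that was built into the two-fold Darboux matrix in Section~\ref{sub22}. At $k_{0}=-k_{1}$ I combine the block symmetry $\sigma_{3}T(\eta,-k,k_{1})\sigma_{3}=T(\eta,k,k_{1})$, read off the diagonal/off-diagonal structure of (\ref{dt2}), with the Jost symmetry $\varphi_{-}(x;-k_{1})=\sigma_{3}\varphi_{-}(x;k_{1})$ supplied by (\ref{dcx}), yielding $M(-k_{1})\varphi_{-}(x;-k_{1})=\sigma_{3}M(k_{1})\varphi_{-}(x;k_{1})=0$.

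The remaining three cases follow the same recipe. For $\phi_{+}^{(1)}$ on $D_{+}$ the Darboux matrix uses the kernel vector $\eta=\phi_{+}(x;k_{1})e^{ik_{1}^{2}x}$, and the argument at $k=\pm k_{1}$ works verbatim. For $\varphi_{+}^{(1)}$ and $\phi_{-}^{(1)}$ on $D_{-}$ the Darboux matrix is based on the conjugate parameter $\bar{k}_{1}$ with kernel built from $\varphi_{+}(\cdot;\bar{k}_{1})$ or $\phi_{-}(\cdot;\bar{k}_{1})$, so the analogous vanishing at $k=\pm\bar{k}_{1}$ follows from the companion kernel identity $T(\bar{\eta},\bar{k}_{1},\bar{k}_{1})\bar{\eta}=0$ together with the same $\sigma_{3}$-symmetry trick.

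I do not anticipate a genuine obstacle: the whole argument is powered by the two kernel conditions imposed when constructing (\ref{dt2}) in Section~\ref{sub22} and by the discrete symmetry $k\mapsto -k$ shared by both $T$ and each Jost function. The only care required is book-keeping, namely matching the correct kernel vector to each of the four Jost functions so that its value at the singular point is proportional to that vector (or to its $\sigma_{3}$-image), and checking that the scalar factor relating the unmodified matrix (\ref{u2bh3}) to the modified one (\ref{dt2}) does not itself vanish at the critical $k$, so it cannot mask the cancellation coming from the kernel identity.
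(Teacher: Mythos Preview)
Your proposal is correct and provides a cleaner route than the paper's. The paper proceeds by direct computation: for $\varphi_{-}^{(1)}$ it writes out the first component explicitly as a quotient with denominator $(k^{2}-k_{1}^{2})\,m_{k_{1}}(\varphi_{-},\varphi_{-})$, isolates in the numerator a residual term $G(x;k)$ carrying the possible singularity, and then checks by hand (using the even/odd parity in $k$ of the Jost-function components) that $G(\cdot;\pm k_{1})=0$, so that $G$ factors as $(k^{2}-k_{1}^{2})\widetilde G$. Your argument bypasses this algebra by going back to the defining kernel condition $T_{\mathrm{old}}(\eta,k_{1},k_{1})\eta=0$ of the two-fold Darboux matrix, combined with the $\sigma_{3}$-conjugation symmetry under $k\mapsto -k$ shared by $T$ and the Jost functions; this makes removability at $\pm k_{1}$ (resp.\ $\pm\bar k_{1}$) a structural consequence of how $T$ was built, rather than something verified by component-wise cancellation. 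Both approaches are valid, but yours explains \emph{why} the cancellation happens and transfers to all four Jost functions without repeating any computation.
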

  \begin{proof}
 It is suffice to consider the first Jost function $\varphi_-^{(1)}(x ;k)$ represent by (\ref{vx1}) and the other cases follow similarly. We denote $\varphi_-=(\varphi_{-,1}, \varphi_{-,2})^T$ and $\varphi_-^{(1)}=(\varphi_{-,1}^{(1)}, \varphi_{-,2}^{(1)})^T$ for the 2-vectors, and obtain for $k \in \mathbb{C}_I \cup \mathbb{C}_{I I I} \backslash\left\{\pm k_1\right\}$
 \begin{equation}
   \begin{aligned}
   \varphi_1^{-,(1)}(x;k)=&-\frac{k^2}{k^2-k_1^2}\left\{\left(\frac{|k_1|^2m_{k_1}(\varphi_-,\varphi_-)}{k^2m_{\bar{k}_1}(\varphi_-,\varphi_-)}-1\right)\varphi_{-,1}+\frac{(k_1^2-\bar{k}_1^2)\varphi_{-,1}|\varphi_{-,2}|^2}{km_{\bar{k}_1}(\varphi_-,\varphi_-)}\right\} \\
   =& \frac{\left(k^2-k_1^2\right) \bar{k}_1\left|\varphi_{-,1}(x;k)\right|^2 \varphi_{-,1}(x;k)+G(x;k)}{(k^2-k_1^2) m_{k_1}(\varphi_-, \varphi_-)},
   \end{aligned}
 \end{equation}
   where
  \begin{equation}
   G(x;k):=(k^2-\bar{k}_1^2) k_1\left|\varphi_{-,2}(x;k)\right|^2 \varphi_{-,1}(k)-k(k_1^2-\bar{k}_1^2) |\varphi_{-,2}(x;k)|^2\varphi_{-,1}(x;k).
  \end{equation}
   It is clear $G(x;k_1)=G(x;-k_1)=0$, as $\varphi_{-,1}(x;k)$ is even in $k$ and $\varphi_{-,2}(x;k)$ is odd in \cite{laff2}. $G$ is analytic in $\mathbb{C}_I \cup \mathbb{C}_{I I I}$ on the basis of Proposition 3.1 \cite{laff2}. Hence, we have $G(x;k)=\left(k^2-k_1^2\right) \widetilde{G}(x;k)$, where $\widetilde{F}$ is analytic in $\mathbb{C}_I \cup \mathbb{C}_{I I I}$. Furthermore, through the calculation
  \begin{equation}
   \varphi_{-,1}^{(1)}(x;k)= \frac{\bar{k}_1\left|\varphi_{-,1}(x;k)\right|^2 \varphi_{-,1}(x;k)+\widetilde{G}(x;k)}{m_{k_1}(\varphi_-, \varphi_-)},
  \end{equation}
   we deduce $\pm k_1$ are removable singularities of $\varphi_{-,1}^{(1)}(x;k)$. It is easy to know  $\pm k_1$ are also removable singularities of $\varphi_{-,2}^{(1)}(x;k)$.
  \end{proof}

Next, we characterize the relation between $\widetilde{\eta}$ and the new Jost functions in the following proposition.
\begin{proposition}
  \label{l55}
Fix $k_1 \in \mathbb{C}_I$ such that $a(k_1)=0$ and $a^{\prime}(k_1) \neq 0$. By (\ref{xjt}), $\widetilde{\eta}$ can be expressed as
\begin{equation}
\widetilde{\eta}(x)=\frac{\gamma\bar{k}_1}{k_1a^{(1)}(k_1)} e^{-i k_1^2 x} \varphi_-^{(1)}(x ; k_1)+\frac{\bar{k}_1}{k_1 a^{(1)}(k_1)} e^{ik_1^2 x} \phi_+^{(1)}(x;k_1),\label{exx1}
\end{equation}
where the new Jost functions $\varphi_-^{(1)}$ and $\phi_+^{(1)}$ are constructed in (\ref{vx1}) and (\ref{vx3}), $\gamma \neq 0$ is the norming constant under $a(k_1)=0$, and $a^{(1)}(k_1) \neq 0$ as in Lemma \ref{l41}.
\end{proposition}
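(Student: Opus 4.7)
My plan is to interpret (\ref{exx1}) as a basis decomposition of the vector $\widetilde{\eta}$ and to identify the two scalar coefficients explicitly. The previous proposition already establishes that $\widetilde{\eta}$, with components given by (\ref{xjt}), satisfies the spectral problem (\ref{laxn1}) at $k=k_1$ with the new potential $u^{(1)}$. On the other hand, the two vectors $e^{-ik_1^2 x}\varphi_-^{(1)}(x;k_1)$ and $e^{ik_1^2 x}\phi_+^{(1)}(x;k_1)$, constructed in (\ref{vx1}) and (\ref{vx3}) at the removable singularity $k=k_1$, solve the same linear system (this is precisely the point of the Darboux construction), and by Proposition \ref{l41} their Wronskian equals $a^{(1)}(k_1)$ via the determinantal identity (\ref{ak}), which is nonzero by hypothesis. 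They thus form a fundamental pair, so one may write $\widetilde{\eta}(x) = \alpha\, e^{-ik_1^2 x}\varphi_-^{(1)}(x;k_1) + \beta\, e^{ik_1^2 x}\phi_+^{(1)}(x;k_1)$ for uniquely determined constants $\alpha,\beta\in\mathbb{C}$, and the task reduces to computing these constants.

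To compute $\alpha$ and $\beta$, I would apply Cramer's rule. The common denominator is the Wronskian $a^{(1)}(k_1)$, which is independent of $x$. The two numerators are Wronskians pairing $\widetilde{\eta}$ with $e^{ik_1^2 x}\phi_+^{(1)}$ and with $e^{-ik_1^2 x}\varphi_-^{(1)}$, respectively. For each numerator I would use (\ref{xjt}) to rewrite $\widetilde{\eta}$ in terms of $\eta = \varphi_-(x;k_1)e^{-ik_1^2 x}$, apply the identity $\varphi_-(x;k_1)e^{-ik_1^2 x} = \gamma\, \phi_+(x;k_1)e^{ik_1^2 x}$ forced by $a(k_1)=0$ (with $\gamma$ the norming constant), and then appeal to the symmetry properties (\ref{asc})--(\ref{csc1}) of $A_{k_1}$ and $C_{k_1}$ together with the explicit Darboux matrix (\ref{dt2}) to reduce the determinants to elementary expressions. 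The factor $\bar{k}_1/k_1$ appearing in both coefficients originates from the prefactor $\bar{k}_1/k_1$ built into the second component of $\widetilde{\eta}$ in (\ref{xjt}), while the extra $\gamma$ enters only the $\alpha$-numerator because pairing $\widetilde{\eta}$ with $\phi_+^{(1)}$ requires converting the $\varphi_-$ inside $\widetilde{\eta}$ to a multiple of $\phi_+$ via the norming-constant identity.

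The principal obstacle I anticipate is the evaluation of $\varphi_-^{(1)}(x;k_1)$ and $\phi_+^{(1)}(x;k_1)$ at the removable singularity $k=k_1$: the Darboux matrix prefactor $-1/(k^2-k_1^2)$ in (\ref{dt2}) is cancelled only by the vanishing of its action on $\eta$ at $k=k_1$, so the values at $k_1$ must be handled through the factorization $G(x;k)=(k^2-k_1^2)\widetilde{G}(x;k)$ from the preceding proposition (or by staying at $k$ slightly off $k_1$ and passing to the limit at the end). A cleaner alternative, which I would probably prefer, is to match the asymptotic behavior at $x\to-\infty$: (\ref{xjt}) combined with $\varphi_-(x;k_1)\to e_1$ yields the leading behavior of $\widetilde{\eta}$, while on the right-hand side $\varphi_-^{(1)}\to e_1$ and the scattering relation expressing $\phi_+^{(1)}(x;k_1)$ as $-b^{(1)}(k_1) e^{-2ik_1^2 x}\varphi_-^{(1)}(x;k_1) + a^{(1)}(k_1)\phi_-^{(1)}(x;k_1)$, combined with $\phi_-^{(1)}\to e_2$, produces the two independent exponential modes. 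Equating the coefficients of these modes yields a $2\times 2$ linear system in $(\alpha,\beta)$ whose unique solution is precisely (\ref{exx1}).
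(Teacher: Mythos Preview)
Your framework is correct and matches the paper: write $\widetilde{\eta}$ as a linear combination of the two independent solutions $e^{-ik_1^2x}\varphi_-^{(1)}(x;k_1)$ and $e^{ik_1^2x}\phi_+^{(1)}(x;k_1)$ (independent because their Wronskian is $a^{(1)}(k_1)\neq 0$), then identify the coefficients. The paper carries out the identification via your ``cleaner alternative'' of asymptotic matching, but with one simplification you should adopt: instead of working only at $x\to-\infty$ and invoking a scattering relation for $\phi_+^{(1)}$, the paper uses \emph{both} limits $x\to\pm\infty$, each producing one coefficient directly. Concretely, from (\ref{xjt}) and $\varphi_-(x;k_1)\to e_1$ one reads off $e^{-ik_1^2x}\widetilde{\eta}(x)\to(\bar{k}_1/k_1)e_2$ as $x\to-\infty$; rewriting $\eta=\gamma\,\phi_+(x;k_1)e^{ik_1^2x}$ via the norming-constant identity and using $\phi_+(x;k_1)\to e_2$ gives $e^{ik_1^2x}\widetilde{\eta}(x)\to(\gamma\bar{k}_1/k_1)e_1$ as $x\to+\infty$. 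On the right-hand side the same two limits yield $c_2\,a^{(1)}(k_1)e_2$ and $c_1\,a^{(1)}(k_1)e_1$ respectively, and equating gives (\ref{exx1}).

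The advantage over your $-\infty$-only version is twofold. First, the quantity you write as $b^{(1)}(k_1)$ is not defined: $b^{(1)}(k)$ lives only on $\mathbb{R}\cup i\mathbb{R}$ and has no a priori analytic continuation to $k_1\in\mathbb{C}_I$, so the scattering relation you invoke is not available as stated (one can of course introduce an ad hoc constant as the coefficient of $\varphi_-^{(1)}$ in the $-\infty$ expansion of $\phi_+^{(1)}$, but it then has to be eliminated). Second, matching both modes at a single end would require sub-leading information about $\widetilde{\eta}$ at $-\infty$, whereas the two-ended argument needs only the leading limits, which drop out immediately from $\varphi_-\to e_1$ and $\phi_+\to e_2$. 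Your Cramer's-rule route is also fine, and if you evaluate each constant Wronskian numerator at the appropriate end ($x\to+\infty$ for $\alpha$, $x\to-\infty$ for $\beta$) it collapses to exactly the computation above; there is no need to confront the removable singularity of $T$ at $k=k_1$ directly.
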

\begin{proof}
We denote $\widetilde{\eta}=(\widetilde{\eta}_1, \widetilde{\eta}_2)^T$ and $\varphi_-=(\varphi_{-,1}, \varphi_{-,2})^T$. Components of $\widetilde{\eta}$ given by (\ref{xjt}) are explicitly expressed by
\begin{align}
  &\widetilde{\eta}_1(x)=e^{i\bar{k}_1^2x}\overline{\varphi_{-,2}(x ; k_1)},\label{w1}\\
  &\widetilde{\eta}_2(x)=e^{i\bar{k}_1^2x}\overline{\varphi_{-,1}(x ; k_1)}.\label{w2}
\end{align}
  As $$\lim _{x \rightarrow-\infty} \varphi_-(x ;k_1)=e_1,$$ we have
\begin{equation}
  \lim _{x \rightarrow-\infty} e^{-i k_1^2 x} \widetilde{\eta}(x)=\frac{\bar{k}_1}{k_1}e_2.\label{lm1}
\end{equation}
When $a(k_1)=0$, the relation
\begin{equation}
  \varphi_-(x ; k_1) e^{-ik_1^2 x}=\gamma \phi_+(x ; k_1) e^{i k_1^2 x} \quad x \in \mathbb{R}
  \end{equation}
follows. Substituting it into (\ref{w1}) and (\ref{w2}), $\widetilde{\eta}$ can be rewritten as:
\begin{align}
  &\widetilde{\eta}_1(x)=\gamma e^{i(\bar{k}_1^2-2k_1^2) x} \overline{\phi_{+,2}(x ; k_1)},\\
  &\widetilde{\eta}_2(x)=\frac{\gamma \bar{k}_1e^{i(\bar{k}_1^2-2k_1^2) x} \overline{\phi_{+, 1}\left(x ;k_1\right)}}{k_1}.
  \end{align}
Since $$\lim _{x \rightarrow+\infty} \phi_+\left(x ; k_1\right)=e_2,$$ then
\begin{equation}
\lim _{x \rightarrow+\infty} e^{i k_1^2 x} \widetilde{\eta}(x)=\frac{\gamma\bar{k}_1}{k_1} e_1,\label{lm2}
\end{equation}
follows.
We know that $\widetilde{\eta}$ is a solution of the FL spectral problem (\ref{laxn1}) associated with the new potential $u^{(1)}$ for $k=k_1$ and  $\varphi_-^{(1)}(x ; k)$ and $\phi_+^{(1)}(x ; k)$ are analytic at $k_1$. Any solution of the second-order system can be expressed by
\begin{equation}
\widetilde{\eta}(x)=c_1 \varphi_-^{(1)}(x ; k_1) e^{-i k_1^2 x}+c_2 \phi_+^{(1)}(x ; k_1) e^{i k_1^2 x},
\end{equation}
where $c_1, c_2$ are independent of $x$. With the boundary conditions (\ref{vpjj}) and the representation (\ref{ak}), we obtain the boundary conditions
\begin{equation}
\lim _{x \rightarrow-\infty} e^{-i k_1^2 x} \widetilde{\eta}(x)=c_2 a^{(1)}(k_1) e_2, \quad \lim _{x \rightarrow+\infty} e^{i k_1^2 x} \widetilde{\eta}(x)=c_1 a^{(1)}(k_1) e_1,\label{lm3}
\end{equation}
where $k_1 \in \mathbb{C}_I$. As $a^{(1)}(k_1) \neq 0$ by Lemma \ref{l41}, from (\ref{lm1})-(\ref{lm3}), $c_1$ and $c_2$ can be expressed as
\begin{equation}
c_1=\frac{\gamma\bar{k}_1}{k_1a^{(1)}(k_1)},\quad c_2=\frac{\bar{k}_1}{k_1a^{(1)}(k_1)},
\end{equation}
which yield the equation (\ref{exx1}).

Instead of the decomposition (\ref{exx1}), we can write
\begin{equation}
\widetilde{\eta}(x):=\varphi_-^{(1)}(x ; k_1) e^{-i k_1^2 x}+\alpha_1 \phi_+^{(1)}(x ; k_1) e^{i k_1^2 x}
\end{equation}
because the Darboux transformation (\ref{u2bh}) is invariant if $\widetilde{\eta}$ is multiplied by a nonzero constant.
\end{proof}

\subsection{Regularity  of    new  potentials }
\label{sec5}
\hspace*{\parindent}

\begin{proposition}
  \label{34}
  Under the same conditions as in Lemma \ref{l55}, for every $u^{(1)} \in H^3(\mathbb{R}) \cap$ $H^{2,1}(\mathbb{R})$ satisfying $\left\|u^{(1)}\right\|_{H^3 \cap H^{2,1}} \leq M$ for some $M>0$, the transformation
  \begin{equation}
  D \left(\widetilde{\eta}, k_1\right) u^{(1)} \in H^3(\mathbb{R}) \cap H^{2,1}(\mathbb{R})
  \end{equation}
  satisfies
  \begin{equation}
  \left\|D (\widetilde{\eta}, k_1) u^{(1)}\right\|_{H^3 \cap H^{2,1}} \leq C_M,\label{lk1e}
  \end{equation}
  where the constant $C_M$ does not depend on $u^{(1)}$.
  \end{proposition}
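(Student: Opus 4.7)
The plan is to bound the two summands of the explicit expression
$D(\widetilde\eta, k_1) u^{(1)} = -A_{k_1}(\widetilde\eta)^2 u^{(1)} + |k_1|^{-2} A_{k_1}(\widetilde\eta) C_{k_1}(\widetilde\eta)$,
which follows from the transformation formula (\ref{u2bh}), separately. Because $k_1 \in \mathbb{C}_I$ the pointwise identity $|A_{k_1}(\widetilde\eta)|=1$ holds, so once $A_{k_1}(\widetilde\eta)^2$ is shown to be a smooth $L^\infty$-multiplier with derivatives bounded up to order three, the first summand inherits its $H^3(\mathbb{R})\cap H^{2,1}(\mathbb{R})$ norm from $u^{(1)}$. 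The substantive content of the proof is therefore to show that $A_{k_1}(\widetilde\eta) C_{k_1}(\widetilde\eta)$ itself belongs to $H^3(\mathbb{R})\cap H^{2,1}(\mathbb{R})$ with norm controlled only through $M$.

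The first preparatory step is to substitute (\ref{exx1}) from Proposition \ref{l55} into $A_{k_1}(\widetilde\eta)$ and $C_{k_1}(\widetilde\eta)$. This rewrites $\widetilde\eta$ as an explicit combination of $e^{-ik_1^2 x}\varphi_-^{(1)}(x;k_1)$ and $e^{ik_1^2 x}\phi_+^{(1)}(x;k_1)$, with coefficients depending only on $k_1$, $\gamma$, and $a^{(1)}(k_1)$; note that $a^{(1)}(k_1)\neq 0$ by Proposition \ref{l41}. Applying Proposition \ref{p52} with the potential $u^{(1)}$ in place of $u$ then produces, at the single spectral value $k=k_1$, the weighted $H^3$-bounds (\ref{xv1})--(\ref{xv2}) for these two Jost functions, and hence controls all the $L^\infty$, $L^2$ and $\langle x\rangle L^2$ norms of $\widetilde\eta$ and of its first three $x$-derivatives by a constant depending only on $M$.

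The second preparatory step is a uniform positive lower bound $|m_{k_1}(\widetilde\eta(x),\widetilde\eta(x))|\geq c_M>0$. Writing $k_1=|k_1|e^{i\pi/4}$ one has $|m_{k_1}(\widetilde\eta,\widetilde\eta)|\geq \tfrac{|k_1|}{\sqrt{2}}|\widetilde\eta|^2$, so it suffices to bound $|\widetilde\eta(x)|$ below uniformly in $x$. Non-vanishing of $\widetilde\eta$ follows from the non-triviality of the two-dimensional kernel defining the Darboux transformation, while the exponential growth $|\widetilde\eta(x)|\gtrsim e^{\operatorname{Im}(k_1^2)|x|}$ as $|x|\to\infty$ together with the continuity of $|\widetilde\eta|$ yields the uniform lower bound. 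With $c_M$ in hand, repeated use of the quotient rule and of the Lax ODE for $\widetilde\eta$ controls $\partial_x^j A_{k_1}(\widetilde\eta)$ in $L^\infty$ for $j=0,1,2,3$, which together with the Leibniz rule handles the $H^3\cap H^{2,1}$ estimate of the first summand $-A_{k_1}(\widetilde\eta)^2 u^{(1)}$.

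The hardest step, and where I expect the bulk of the technical work, is the weighted $L^2$-estimate of $A_{k_1}(\widetilde\eta)C_{k_1}(\widetilde\eta)$. Both the numerator $\widetilde\eta_1\bar{\widetilde\eta}_2$ and the denominator $m_{k_1}(\widetilde\eta,\widetilde\eta)$ grow exponentially at $\pm\infty$, so the integrability, and the $\langle x\rangle$-weighted integrability, of the quotient depend on the precise cancellation of these leading exponential modes. At $x\to-\infty$ the surviving factor involves $\phi_{+,1}^{(1)}(x;k_1)\overline{\phi_{+,2}^{(1)}(x;k_1)}$, and at $x\to+\infty$ it involves $\varphi_{-,2}^{(1)}(x;k_1)\overline{\varphi_{-,1}^{(1)}(x;k_1)}$; in each case the factor with $\langle x\rangle L^2$ decay is provided by (\ref{xv1})--(\ref{xv2}). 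A careful bookkeeping of these leading terms together with the oscillatory cross terms, combined with the $L^\infty$-bounds on $A_{k_1}(\widetilde\eta)$ and its derivatives, then yields the desired estimate (\ref{lk1e}).
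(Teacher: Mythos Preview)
Your overall strategy coincides with the paper's: split $D(\widetilde\eta,k_1)u^{(1)}$ into the multiplier term $-A_{k_1}(\widetilde\eta)^2 u^{(1)}$ and the additive term $|k_1|^{-2}A_{k_1}(\widetilde\eta)C_{k_1}(\widetilde\eta)$, feed in the decomposition~(\ref{exx1}) of $\widetilde\eta$, and transfer the weighted Sobolev control on $\varphi_-^{(1)}(\cdot;k_1)$ and $\phi_+^{(1)}(\cdot;k_1)$ from Proposition~\ref{p52}. The final bookkeeping you sketch---three cross terms coming from expanding $\widetilde\eta_1\overline{\widetilde\eta_2}/m_{k_1}(\widetilde\eta,\widetilde\eta)$ via~(\ref{exx1}), with the $\langle x\rangle L^2$-decay supplied by $\varphi_{-,2}^{(1)}$ on the right and $\phi_{+,1}^{(1)}$ on the left---is exactly what the paper writes out.

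Two points need correction. First, the assertion ``writing $k_1=|k_1|e^{i\pi/4}$'' is wrong: $k_1\in\mathbb{C}_I$ only means $k_1$ lies in the open first quadrant, so $\arg k_1$ is arbitrary in $(0,\pi/2)$. The inequality you want is the one the paper uses, namely $|m_{k_1}(\eta,\eta)|\geq \operatorname{Re}(k_1)\bigl(|\eta_1|^2+|\eta_2|^2\bigr)$, valid for any such $k_1$.

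Second, and more substantively, your lower bound on $|m_{k_1}(\widetilde\eta,\widetilde\eta)|$ via ``continuity plus exponential growth'' gives a bound that is uniform in $x$ but not manifestly uniform over $u^{(1)}$ in the $M$-ball, which is what~(\ref{lk1e}) demands. The paper obtains this quantitatively: from the determinant representation~(\ref{ak}) of $a^{(1)}(k_1)$ and the decomposition~(\ref{exx1}) one gets, for every $x$,
\[
|a^{(1)}(k_1)|\ \leq\ \bigl\|\phi_+^{(1)}(\cdot;k_1)\bigr\|_{L^\infty}\Bigl(|e^{ik_1^2x}\widetilde\eta_1(x)|+|e^{ik_1^2x}\widetilde\eta_2(x)|\Bigr),
\]
and the analogous bound with $e^{-ik_1^2x}$ and $\varphi_-^{(1)}$. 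Since $a^{(1)}(k_1)\neq 0$ by Proposition~\ref{l41} and $\|\phi_+^{(1)}\|_{L^\infty},\|\varphi_-^{(1)}\|_{L^\infty}\leq C_M$ by~(\ref{psib}), this yields an explicit lower bound on $|e^{\pm ik_1^2 x}\widetilde\eta|$, hence on $|m_{k_1}(e^{\pm ik_1^2x}\widetilde\eta,e^{\pm ik_1^2x}\widetilde\eta)|$, in terms of $|a^{(1)}(k_1)|$ and $M$ only. Your compactness argument should be replaced by this Wronskian estimate.
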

  \begin{proof}
  From the representation (\ref{ak}), we have
   \begin{equation}
    \begin{aligned}
    \left|a^{(1)}(k_1)\right|=& |\left(\varphi_{-,1}^{(1)}(x ; k_1) e^{-i k_1^2 x}+\alpha_1 \phi_{+, 1}^{(1)}(x ; k_1) e^{i k_1^2 x}\right) \phi_{+,2}^{(1)}(x ; k_1) e^{i k_1^2 x} \\
    -&\left(\varphi_{-,2}^{(1)}(x ; k_1) e^{-i k_1^2 x}+\alpha_1 \phi_{+,2}^{(1)}(x ; k_1) e^{i k_1^2 x}\right) \phi_{+, 1}^{(1)}(x ; k_1) e^{i k_1^2 x}| \\
    \leq &\left\|\phi_+^{(1)}(\cdot ; k_1)\right\|_{L^{\infty}}\left(|e^{i k_1^2 x} \widetilde{\eta}_1(x)|+|e^{i k_1^2 x} \widetilde{\eta}_2(x)|\right).
    \end{aligned}\nonumber
  \end{equation}
    As $a^{(1)}(k_1) \neq 0$ by Lemma \ref{l41} and $\left|m_k(\eta, \eta)\right| \geq\left|\operatorname{Re}\left(k_1\right)\right|\left(\left|\eta_1\right|^2+\left|\eta_2\right|^2\right)$, there is a constant $C_M>0$ independently of $u^{(1)}$ which yields
  \begin{equation}
    \frac{1}{\left|m_{k_1}\left(e^{i k_1^2 x} \widetilde{\eta}(x), e^{i k_1^2 x} \widetilde{\eta}(x)\right)\right|} \leq C_M \quad \text { for all } x \in \mathbb{R}.
  \end{equation}
    By using the same argument, we also obtain
  \begin{equation}
    \left|a^{(1)}(k_1)\right| \leq\left|\alpha_1\right|^{-1}\left\|\varphi_-^{(1)}\left(\cdot ; k_1\right)\right\|_{L^{\infty}}\left(\left|e^{-i k_1^2 x} \widetilde{\eta}_1(x)\right|+\left|e^{-i k_1^2 x} \widetilde{\eta}_2(x)\right|\right),
  \end{equation}
    such that $\frac{1}{\left|m_{k_1}\left(e^{-i k_1^2 x} \widetilde{\eta}(x), e^{-i k_1^2 x} \widetilde{\eta}(x)\right)\right|} \leq C_M \quad$ for all $x \in \mathbb{R}$.

    With the bound
    \begin{equation}
      \begin{aligned}
      \left|\frac{\widetilde{\eta}_1\widetilde{\bar{\eta}}_2}{m_{k_1}(\widetilde{\eta}, \widetilde{\eta})}\right| &\leq \frac{\left|\varphi_1^{-,(1)}\left(x ; k_1\right) \overline{\varphi_{-,2}^{(1)}\left(x ; k_1\right)}\right|}{\left|m_{k_1}\left(e^{i k_1^2 x} \widetilde{\eta}, e^{i k_1^2 x} \widetilde{\eta}\right)\right|}+\left|\alpha_1\right|^2 \frac{\left|\phi_{-, 1}^{(1)}\left(x ; k_1\right) \overline{\phi_{-,2}^{(1)}(x ; k_1)}\right|}{\left|m_{k_1}\left(e^{-i k_1^2 x} \eta^{(1)}, e^{-i k_1^2 x} \widetilde{\eta}\right)\right|} \\
     &+\left|\alpha_1\right| \frac{\left|\varphi_1^{-,(1)}(x ;k_1) \overline{\phi_{-,2}^{(1)}(x ; k_1)}\right|+\mid \phi_{-,1}^{(1)}(x ; k_1) \overline{\varphi_{-,2}^{(1)}(x ; k_1) \mid}}{\left|m_{k_1}\left(\widetilde{\eta}, \widetilde{\eta}\right)\right|},
      \end{aligned}
      \end{equation}
      and the bounds (\ref{xv1})-(\ref{xv2}), we obtain
  \begin{equation}
  \left\|C_{k_1}(\widetilde{\eta}) u^{(1)}\right\|_{L^{2,1}} \leq C_M .
  \end{equation}
 By the same proof of Lemma \ref{53}, it shows that
  \begin{equation}
  \left\|D(\widetilde{\eta},k_1) u^{(1)}\right\|_{L^{2,1}} \leq C_M .
  \end{equation}
The conclusion of $\left\|\partial_x\left(D (\widetilde{\eta},k_1) u^{(1)}\right)\right\|_{L^{2,1}}$ and $\left\|\partial_x^2\left(D (\widetilde{\eta},k_1) u^{(1)}\right)\right\|_{L^2}$ are also from (\ref{xv1})-(\ref{xv2}). Finally, the bound (\ref{lk1e}) is obtained immediately.
  \end{proof}

 \begin{lemma}
  \label{p1}
  Consider the initial value problem of a system of linear ordinary differential equations
    \begin{align}
    &\frac{d Y}{d x}=A(x) Y,\label{a1} \\
    &Y\left(x_0\right)=Y_0, \quad x_0 \in[\alpha, \beta],\label{a2}
    \end{align}
    where $Y=\left(y_1(x), \cdots, y_n(x)\right)^T$, $A(x)$ is a $n\times n$ continuous matrix-valued function,
    then the system (\ref{a1})-(\ref{a2}) has    a   unique zero solution.
 \end{lemma}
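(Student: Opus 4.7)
The plan is to establish the result by the classical integral-equation-plus-Grönwall argument that underlies all uniqueness theorems for linear systems of ODEs. First I would rewrite the initial value problem (\ref{a1})--(\ref{a2}) as the equivalent Volterra integral equation
$$Y(x) = Y_0 + \int_{x_0}^{x} A(s)\, Y(s)\, ds, \qquad x \in [\alpha, \beta],$$
which is legitimate because $A$ is continuous on the compact interval $[\alpha,\beta]$. Since the stated conclusion is that the only solution is the zero solution, I interpret this as asserting that when $Y_0 = 0$ the integral equation forces $Y \equiv 0$; equivalently, uniqueness of solutions for arbitrary $Y_0$ follows by applying this to the difference of two putative solutions.

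Second, I would set $Y_0 = 0$ and take a convenient norm $|\cdot|$ on $\mathbb{R}^n$ (or $\mathbb{C}^n$) with a compatible operator norm $\|\cdot\|$ for matrices. Because $A$ is continuous on $[\alpha,\beta]$, there exists $M>0$ with $\|A(s)\| \leq M$ for all $s \in [\alpha,\beta]$. For $x \geq x_0$ this yields
$$|Y(x)| \leq \int_{x_0}^{x} \|A(s)\|\, |Y(s)|\, ds \leq M \int_{x_0}^{x} |Y(s)|\, ds,$$
and the analogous estimate with the roles of the endpoints reversed handles $x \leq x_0$. Grönwall's inequality applied to the continuous nonnegative function $x \mapsto |Y(x)|$ then gives $|Y(x)| \leq 0 \cdot e^{M|x-x_0|} = 0$ on all of $[\alpha,\beta]$, so $Y \equiv 0$.

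Third, I would note that this argument in fact delivers uniqueness in full generality: if $Y^{(1)}$ and $Y^{(2)}$ both solve (\ref{a1})--(\ref{a2}) with the same $Y_0$, then $Y := Y^{(1)} - Y^{(2)}$ solves the same system with initial datum $0$, so by the step above $Y^{(1)} \equiv Y^{(2)}$. This is the form in which the lemma is likely to be invoked later (e.g., to deduce that two representations of a Jost function or of $\widetilde{\eta}$ coincide whenever they satisfy the same linear spectral equation and the same boundary value).

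There is essentially no serious obstacle here; the only mild subtlety is handling both directions $x \geq x_0$ and $x \leq x_0$ simultaneously, which is resolved by writing the Grönwall estimate with $\left|\int_{x_0}^{x}\cdot\,ds\right|$ in place of a signed integral, or by splitting the interval $[\alpha,\beta]$ at $x_0$. Compactness of $[\alpha,\beta]$ combined with continuity of $A$ is exactly what makes this textbook argument go through without further hypotheses.
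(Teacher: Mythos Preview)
Your argument is correct and is exactly the standard textbook proof of uniqueness for linear ODE systems via the integral reformulation and Gr\"onwall's inequality. There is nothing to compare against in the paper itself: Lemma~\ref{p1} is stated there without proof, as a classical fact that is subsequently invoked (in Proposition~\ref{53}) to conclude that if the Jost-type vector $\eta$ vanishes at a single point then it vanishes identically. Your reading of the lemma---that ``unique zero solution'' means the zero initial condition forces $Y\equiv 0$, which is equivalent to uniqueness for general $Y_0$ by linearity---matches precisely how the paper uses it.
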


  Based on the  regularity  of $\varphi(x;k)$ and $\phi(x;k)$ given by Proposition \ref{p52},
  we immediately obtain the following proposition
  that the transformation (\ref{dt2}) can be defined
  as an operator from $u \in H^3(\mathbb{R}) \cap H^{2,1}(\mathbb{R})$ to $u^{(1)}\in H^3(\mathbb{R}) \cap H^{2,1}(\mathbb{R})$.

\begin{proposition}
  \label{53}
   Fix $k_{1} \in \mathbb{C}_{I}$. Given a potential $u \in H^3(\mathbb{R}) \cap H^{2,1}(\mathbb{R})$, define $\eta(x):=\varphi_-(x ; k_1) e^{-i k_1^2 x}$, where $\varphi_-$ is the Jost function for the  spectral problem (\ref{laxn1}). Then, $u^{(1)}$ belongs to $H^3(\mathbb{R}) \cap H^{2,1}(\mathbb{R})$.
\end{proposition}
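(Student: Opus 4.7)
The plan is to substitute the explicit transformation formula
\[
u^{(1)} = -A_{k_1}(\eta)^2\, u + |k_1|^{-2}\, A_{k_1}(\eta)\, C_{k_1}(\eta)
\]
from (\ref{u2bh}) and estimate each piece in the weighted Sobolev norm, using $\eta(x)=\varphi_-(x;k_1)e^{-ik_1^2 x}$ and the bounds on $\varphi_-$ supplied by Proposition \ref{p52}. A preliminary observation is that the exponential factor $e^{-ik_1^2 x}$, which grows or decays without bound in $x$, cancels completely in the ratios defining $A_{k_1}$ and $C_{k_1}$, so these coefficients reduce to
\[
A_{k_1}(\eta)=\frac{m_{\bar k_1}(\varphi_-,\varphi_-)}{m_{k_1}(\varphi_-,\varphi_-)},\qquad
C_{k_1}(\eta)=(k_1^2-\bar k_1^2)\,\frac{\varphi_{-,1}\overline{\varphi_{-,2}}}{m_{k_1}(\varphi_-,\varphi_-)},
\]
depending on $\varphi_-$ alone.

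The analytic engine of the argument is the lower bound
\[
|m_{k_1}(\varphi_-,\varphi_-)|\;\ge\;\operatorname{Re}(k_1)\bigl(|\varphi_{-,1}|^2+|\varphi_{-,2}|^2\bigr),
\]
valid for $k_1\in\mathbb{C}_I$. Combined with $2|ab|\le |a|^2+|b|^2$, this yields the uniform $L^\infty$ bounds $\|A_{k_1}(\eta)\|_\infty\le|k_1|/\operatorname{Re}(k_1)$ and $\|C_{k_1}(\eta)\|_\infty\le|k_1^2-\bar k_1^2|/(2\operatorname{Re}(k_1))$. The $L^{2,1}$ bound on the first term of $u^{(1)}$ then follows from $\|A_{k_1}(\eta)^2 u\|_{L^{2,1}}\le\|A_{k_1}(\eta)\|_\infty^2\|u\|_{L^{2,1}}$. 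For the second term I would rewrite
\[
\langle x\rangle\, C_{k_1}(\eta)=(k_1^2-\bar k_1^2)\,\frac{\varphi_{-,1}}{m_{k_1}(\varphi_-,\varphi_-)}\cdot \langle x\rangle\,\overline{\varphi_{-,2}},
\]
bound the first factor uniformly in $x$ (exploiting that $\varphi_-$ never vanishes, since $\det(\varphi_-,\phi_-)=1$, together with its nonzero asymptotic limits at $\pm\infty$), and apply the weighted bound $\|\langle x\rangle \varphi_{-,2}\|_{L^2}\le C_M$ from (\ref{xv1}).

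For the higher derivatives $\partial_x^j u^{(1)}$, $j=1,2,3$, I would repeatedly apply the product rule to the formula above and at each step eliminate $\partial_x\varphi_-$ (and its higher derivatives) in favour of algebraic expressions in $\varphi_{-,1},\varphi_{-,2},u,u_x$ by invoking the spectral equation (\ref{laxn1}). Every resulting term then has the form (rational combination of $\varphi_-$-components, uniformly bounded by the same $m_{k_1}$ estimate) $\times$ (either a component of $\varphi_-$, a derivative $\partial_x^i\varphi_-$ weighted by $\langle x\rangle$ as allowed by (\ref{xv1}), or a derivative of $u$). The desired weighted $L^2$ bound for $\partial_x^j u^{(1)}$ follows by pairing the uniformly bounded rational prefactor with the individual weighted bounds of Proposition \ref{p52} and the $H^3\cap H^{2,1}$ regularity of $u$.

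The main obstacle is the weighted estimate $\|\langle x\rangle C_{k_1}(\eta)\|_{L^2}\le C_M$ (and its analogues after differentiation): the pointwise bound on $C_{k_1}(\eta)$ is only uniform, not decaying, so a power of $\langle x\rangle$ cannot simply be absorbed. The resolution is to track carefully which factor of $\varphi_{-,j}$ sits in the numerator at each stage, always extracting one factor of $\overline{\varphi_{-,2}}$ (or one of its derivatives) whose weighted $L^2$ norm is already controlled by Proposition \ref{p52}, while the remaining rational expression in $\varphi_-$ stays uniformly bounded thanks to the $m_{k_1}$ lower bound. The same bookkeeping must be repeated at each order of differentiation, which is the most laborious part of the proof but introduces no new conceptual difficulty.
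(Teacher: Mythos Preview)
Your overall strategy---reducing to bounds on $A_{k_1}(\varphi_-)$ and $C_{k_1}(\varphi_-)$ and their derivatives, using the pointwise inequality $|m_{k_1}(\varphi_-,\varphi_-)|\ge \operatorname{Re}(k_1)\,|\varphi_-|^2$ together with Proposition~\ref{p52}---matches the paper's approach. The gap is in the step where you claim the factor $\varphi_{-,1}/m_{k_1}(\varphi_-,\varphi_-)$ is uniformly bounded in $x$. Your justification (``$\varphi_-$ never vanishes, together with its nonzero asymptotic limits at $\pm\infty$'') breaks down precisely in the case that matters: if $a(k_1)=0$, then $\varphi_-(x;k_1)=\gamma\,e^{2ik_1^2x}\phi_+(x;k_1)$ and, since $\operatorname{Im}(k_1^2)>0$, the whole vector $\varphi_-(x;k_1)\to 0$ as $x\to+\infty$. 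Consequently $m_{k_1}(\varphi_-,\varphi_-)\to 0$, and your bound $|\varphi_{-,1}/m_{k_1}|\le 1/(\operatorname{Re}(k_1)\,|\varphi_-|)$ blows up. In fact one computes $\varphi_{-,1}/m_{k_1}(\varphi_-,\varphi_-)=(\bar\gamma)^{-1}e^{2\operatorname{Im}(k_1^2)x}\,e^{2i\operatorname{Re}(k_1^2)x}\,\phi_{+,1}/m_{k_1}(\phi_+,\phi_+)$, which grows exponentially unless $\phi_{+,1}$ decays exponentially---and Proposition~\ref{p52} only gives weighted $L^2$ decay.

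The fix, which is what the paper does, is to split $\mathbb{R}=(-\infty,R]\cup[R,\infty)$. On $(-\infty,R]$ your argument works verbatim because $\varphi_-\to e_1$ at $-\infty$ gives a genuine lower bound on $|\varphi_-|$. On $[R,\infty)$ one uses the alternative representation $\eta=\gamma\,\phi_+(x;k_1)e^{ik_1^2x}$ (valid when $a(k_1)=0$), so that $C_{k_1}(\eta)=C_{k_1}(\phi_+)$ and the weighted estimate becomes $\langle x\rangle C_{k_1}(\eta)=(k_1^2-\bar k_1^2)\,\dfrac{\overline{\phi_{+,2}}}{m_{k_1}(\phi_+,\phi_+)}\cdot\langle x\rangle\,\phi_{+,1}$; now $\phi_{+,2}\to 1$ at $+\infty$ gives the uniform lower bound on the denominator, and $\langle x\rangle\phi_{+,1}\in L^2$ comes from (\ref{xv2}). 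Your bookkeeping for higher derivatives is fine once this two-sided decomposition is in place.
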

\begin{proof}
  The proof is in two parts:

  $\bullet$ $u^{(1)}$ is another smooth solution of the FL equation.

  $\bullet$  $\left\|u^{(1)}\right\|_{H^3\cup H^{2,1}}<\infty$.\\
  We see
  $$
  A_k(\eta)=-1 \quad \text { and } \quad C_k(\eta)=0 \quad \text { if } \quad k \in \mathbb{R} \cup i \mathbb{R},
  $$
  which implies $u^{(1)}=-u$ in this case. Therefore, the transformation has no sense to a value of $k$ on the continuous spectrum. Thus, our analysis focuses on the case when $k$ is outside the continuous spectrum, i.e., for $k\in \mathbb{C}_I$.

 With Proposition \ref{p1} and $\operatorname{Re}(k_1)>0$, we note that
 \begin{equation}
m_{k_1}(\eta, \eta)=0\Leftrightarrow\eta=0.
 \end{equation}
$\eta^{\prime}(x_0)=0$ follows $\eta(x_0)=0$ at $x_0 \in \mathbb{R}$, which implies $\eta(x)=0$ for every $x \in \mathbb{R}$. As $\varphi_-\left(x ; k_1\right)$ satisfies the nonzero asymptotic limit (\ref{j1j}) as $x \rightarrow-\infty$, then $\eta(x)=\varphi_-\left(x ; k_1\right) e^{-i k_1^2 x} \neq 0$ and $m_{k_1}(\eta, \eta) \neq 0$ for every finite $x \in \mathbb{R}$. It is suffice to replace $m_{k_1}(\eta,\eta)$ by $m_{k_1}(\varphi_-, \varphi_-)$.

According to (\ref{asc}) and (\ref{csc}), if $a(k_1) \neq 0$, there exists $a>0$ such that
\begin{equation}
 \left|m_{k_1}(\varphi_-, \varphi_-)\right| \geq a,\label{mkg}
\end{equation}
for all $x \in \mathbb{R}$.
In fact, as
\begin{equation}
\lim_{x \rightarrow-\infty}m_{k_1}(\varphi_-, \varphi_-)=k_1,\label{mk1j}
\end{equation}
and with (\ref{mkg}), $m_{k_1}(\varphi_-, \varphi_-)$ may only tend to zero when $x \rightarrow+\infty$. However, it follows from the representation (\ref{j1j}) that
\begin{equation}
\lim_{x \rightarrow+\infty}\phi_+(x ; k_1) =e_2,
\end{equation}
and the fact that $\varphi_-(\cdot ; k_1) \in L^{\infty}(\mathbb{R})$ imply that
\begin{equation}
\lim_{x \rightarrow+\infty}\varphi_{-,1}(x ; k_1)=a(k_1),\nonumber
\end{equation}
so that
\begin{equation}
\lim_{x \rightarrow+\infty}m_{k_1}(\varphi_-, \varphi_-)\neq  0.
\end{equation}
Therefore,  (\ref{mkg}) is true. With the triangle inequality, the bounds (\ref{xv1})-(\ref{xv2}) of Proposition \ref{p52}, the bound (\ref{mkg}), and $\left|A_{k_1}(\eta)\right|=1$, we obtain
\begin{equation}
  \begin{aligned}
\left\|u^{(1)}\right\|_{L^{2,1}}&\leq\|u\|_{L^{2,1}}+\left\|C_{k_1}(\varphi_-)\right\|_{L^{2,1}}\leq\|u\|_{L^{2,1}}\\
&+2 a^{-1}\left|k_1^2-\bar{k}_1^2\right|\left\|\varphi_{-,1}(\cdot, k_1) \overline{\varphi_{-,2}\left(\cdot, k_1\right)}\right\|_{L^{2,1}}<\infty.\label{u2k}
  \end{aligned}
\end{equation}
The norms $\left\|\partial_x u^{(1)}\right\|_{L^{2,1}}$, $\left\|\partial_x^2 u^{(1)}\right\|_{L^2}$ and $\left\|\partial_x^3 u^{(1)}\right\|_{L^2}$ are estimated similarly with the bounds (\ref{xv1})-(\ref{xv2}) and (\ref{mkg}).

If $a(k_1)=0$, the uniform bound (\ref{mkg}) is no longer valid since
\begin{equation}
\lim_{x \rightarrow+\infty}m_{k_1}(\varphi_-, \varphi_-)=0.
\end{equation}
The proof on the estimate (\ref{u2k}) is done on the interval $(-\infty, R)$ with arbitrary $R>0$. To extend the estimate (\ref{u2k}) on the interval $(R, \infty)$, we use
\begin{equation}
  \varphi_-(x ; k_j) e^{-i k_j^2 x}=\gamma \phi_+(x ; k_j) e^{i k_j^2 x}, \quad x \in \mathbb{R}
  \end{equation}
and write $\eta(x)=\varphi_-(x ;k_1) e^{-i k_1^2 x}=\gamma \phi_+(x ; k_1) e^{i k_1^2 x}$.

Therefore, $u^{(1)}=D(\varphi_-,k_1) u$ can be rewritten as the second equation in (\ref{u2fj}). As
\begin{equation}
\lim_{x \rightarrow+\infty}m_{k_1}(\phi_+, \phi_+)=\bar{k}_1,
\end{equation}
we use the same method on the interval $(R, \infty)$ by using the equivalent representation of $u^{(1)}$.
\end{proof}

\section{Global   well-posedness   with solitons}
\label{sec4}
\subsection{Time-evolution of  Darboux transformation}
    \hspace*{\parindent}

We first explain the idea for the $1$-soliton and then the statement on the case of fintely many solitons is proved iteratively.
\label{sec41}
    \hspace*{\parindent}

Let $u(t, \cdot) \in \mathcal{Z}_1 \subset H^3(\mathbb{R}) \cap H^{2,1}(\mathbb{R})$ be a local solution of the Cauchy problem (\ref{cs1}) on $(-T, T)$ for some $T>0$. For every fixed time $t \in(-T, T)$, we obtain a new potential $u^{(1)}(t, \cdot)=D(\eta(t, \cdot), k_1) u(t, \cdot)$ of the spectral problem (\ref{laxn1}) by means of the Darboux transformation. If $k_1 \in \mathbb{C}_I$ is taken such that $a(k_1)=0$, then $u^{(1)}(t, \cdot) \in \mathcal{Z}_0$. On the other hand, let $\hat{u}(t, \cdot) \in \mathcal{Z}_0$ be a solution to the Cauchy problem (\ref{cs1}) which satisfies the initial condition $\hat{u}(0, \cdot)=u^{(1)}(0, \cdot) \in \mathcal{Z}_0$, then the solution $\hat{u}(t, \cdot) \in \mathcal{Z}_0$ exists for every $t \in \mathbb{R}$, in particular, for $t\in(-T,T)$ \cite{cqy,laff2}.

Therefore, the core is to prove $\hat{u}(t,\cdot)=u^{(1)}(t,\cdot)$ for every $t \in(-T, T)$, which first requires the following lemma on the identity of the scattering data for the two potentials.
\begin{lemma}
  \label{71}
For every $t \in(-T, T)$, the potentials $\hat{u}(t, \cdot)$ and $u^{(1)}(t, \cdot)$ produce the same scattering data.
\end{lemma}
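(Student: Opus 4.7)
The plan is a two-step argument comparing scattering data at $t=0$ and then propagating forward: first establish agreement of $(\hat a,\hat b)$ and $(a^{(1)}, b^{(1)})$ at $t=0$, then show both pairs satisfy the same explicit first-order ODE in $t$, so they must coincide on $(-T,T)$.

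For the first step, agreement at $t=0$ is immediate from the construction: $\hat u$ is defined as the $\mathcal{Z}_0$-solution with initial data $\hat u(0,\cdot)=u^{(1)}(0,\cdot)$, hence the direct scattering map of Section~\ref{sec:section2} produces $(\hat a(0,k),\hat b(0,k))=(a^{(1)}(0,k),b^{(1)}(0,k))$.

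For the second step, I would argue that both pairs satisfy the standard FL time evolution
\[
a(t,k)=a(0,k),\qquad b(t,k)=b(0,k)\,e^{2i\eta^{2}(k)t}.
\]
For $\hat u$ this is the classical computation: since $\hat u(t,\cdot)$ solves the FL equation on $(-T,T)$ and the normalized Lax pair (\ref{laxn1})--(\ref{laxn2}) is compatible, evaluating the $t$-part at $x\to\pm\infty$ using the Jost asymptotics (\ref{vpjj}) and the fact that $H\to 0$ at spatial infinity (the potential vanishes) yields the displayed evolution for $(\hat a,\hat b)$. For $u^{(1)}$, I would instead route the argument through $u$: since $u(t,\cdot)$ solves the FL equation on $(-T,T)$, the same classical argument gives the evolution of $(a(t,k),b(t,k))$, and then the algebraic Darboux identities from Proposition~\ref{l41},
\[
a^{(1)}(t,k)=\frac{k^{2}-\bar k_1^{2}}{k^{2}}\,a(t,k),\qquad b^{(1)}(t,k)=-b(t,k),
\]
hold at each fixed $t$. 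Composing them propagates the FL evolution law from $(a,b)$ to $(a^{(1)},b^{(1)})$, matching the law already established for $(\hat a,\hat b)$. Since both pairs satisfy the same decoupled ODE in $t$ with identical initial data, they coincide on $(-T,T)$.

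The main obstacle is justifying that the Darboux identities of Proposition~\ref{l41}, which were derived at a single fixed time, remain valid when applied time-slice by time-slice. Concretely, one must check that $\eta(t,x):=\varphi_-(t,x;k_1)e^{-ik_1^{2}x}$ is the correct kernel vector at every $t$, and that the transformed functions $\varphi_\pm^{(1)}(t,\cdot;k)$, $\phi_\pm^{(1)}(t,\cdot;k)$ constructed by (\ref{vx1})--(\ref{vx4}) genuinely are the Jost functions of the time-$t$ potential $u^{(1)}(t,\cdot)$. The normalization at $x\to\pm\infty$ and the uniqueness of the Volterra equation (\ref{fi5}) settle this spatial question as in Section~\ref{sec:section3}. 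Compatibility with the time flow reduces to the observation that, because $u(t,\cdot)$ satisfies the FL equation, $\varphi_-(t,\cdot;k_1)$ satisfies not only (\ref{laxn1}) but also (\ref{laxn2}) with the same potential, so $\eta(t,x)$ evolves consistently in $t$; the zero $k_1$ of $a(t,k)$ is time-independent (from the evolution law above), so the Darboux data are well defined throughout $(-T,T)$. Once these points are in place, the two-step comparison closes the proof.
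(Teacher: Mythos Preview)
Your proposal is correct and follows essentially the same route as the paper: agreement of scattering data at $t=0$ from the shared initial datum, the standard FL time evolution for $\hat u$, and the evolution for $u^{(1)}$ deduced from that of $u$ via the Darboux identities of Proposition~\ref{l41} applied time-slice by time-slice. The only cosmetic difference is that the paper phrases the argument in terms of the reflection coefficient $r=b/a$ (since both potentials lie in $\mathcal{Z}_0$, this is the entire scattering data), whereas you track the pair $(a,b)$; the logic is otherwise identical, and your explicit discussion of why the Darboux identities remain valid at each $t$ is a point the paper leaves implicit.
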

\begin{proof}
As both potentials $\hat{u}(t, \cdot)$ and $u^{(1)}(t, \cdot)$ remain in $\mathcal{Z}_0$ for every $t \in(-T, T)$, then the scattering data consist only of the reflection coefficient in \cite{laff2}. For the potential $u(t, \cdot) \in \mathcal{Z}_1$ with $t \in(-T, T)$, we have $r(t;k)=b(t;k) / a(t;k)$ for $k \in \mathbb{R} \cup i \mathbb{R}$. By denoting
\begin{equation}
r^{(1)}(t;k)=b^{(1)}(t;k) / a^{(1)}(t;k),
\end{equation}
we have the reflection coefficient of $u^{(1)}(t, \cdot) \in \mathcal{Z}_0$ for $t \in(-T, T)$. Lemma \ref{l41} characterizes how the two reflection coefficients are related with one another:
\begin{equation}
r^{(1)}(t;k)=-r(t;k) \frac{k_1^2}{\bar{k}_1^2} \frac{k^2-\bar{k}_1^2}{k^2-k_1^2}, \quad k \in \mathbb{R} \cup i \mathbb{R}, \quad t \in(-T, T).\label{rb2}
\end{equation}
When $u(x,t)$ is a solution to the FL equation in $\mathcal{Z}_1$, we may derive the time evolution of the reflection coefficient $r(t;k)$ as
\begin{equation}
r(t;k)=r(0;k) e^{2i\eta^2 t}, \quad t \in(-T, T),\label{rr0}
\end{equation}
which, with the aid of (\ref{rb2}), implies that
\begin{equation}
r^{(1)}(t;k)=r^{(1)}(0;k) e^{2 i \eta^2 t}, \quad t \in(-T, T).\label{rr0t}
\end{equation}
Note that the equation (\ref{rr0}) coincides with the case without solitons in \cite{laff2}.

For the reflection coefficient $\hat{r}$ of the potential $\hat{u}$, we know $r^{(1)}(0, k)=\hat{r}(0, k)$ since $u^{(1)}(0, \cdot)=\hat{u}(0, \cdot)$. By using the time evolution of the reflection coefficient from \cite{laff2} and the expression (\ref{rr0t}), we obtain
\begin{equation}
\hat{r}(t;k)=\hat{r}(0;k) e^{2 i \eta^2 t}=r^{(1)}(0;k) e^{2 i \eta^2 t}=r^{(1)}(t;k), \quad t \in(-T, T),
\end{equation}
which proves the lemma.
\end{proof}

\begin{proposition}
  \label{72}
The potential $u^{(1)}(t, \cdot)=D (\eta(t,\cdot),k_1) u(t,\cdot)$ is a new solution of the FL equation for $t \in(-T, T)$.
\end{proposition}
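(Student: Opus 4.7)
My plan is to avoid a direct Lax-pair computation on the Darboux formula and instead identify $u^{(1)}(t,\cdot)$ with the global solution $\hat{u}(t,\cdot) \in \mathcal{Z}_0$ constructed in \cite{laff2}, via uniqueness of the inverse scattering map on $\mathcal{Z}_0$. The building blocks are all in place: Proposition \ref{l41} together with Proposition \ref{53} puts $u^{(1)}(t,\cdot)$ into $\mathcal{Z}_0$ for each fixed $t \in (-T,T)$; Lemma \ref{71} gives $r^{(1)}(t;k) = \hat{r}(t;k)$ for every such $t$ and every $k \in \mathbb{R}\cup i\mathbb{R}$; and \cite{laff2} ensures that a potential in $\mathcal{Z}_0$ is uniquely recovered from its reflection coefficient through the associated Riemann--Hilbert problem.

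The main step is then short. Fixing $t \in (-T,T)$, the pair $(u^{(1)}(t,\cdot),\hat{u}(t,\cdot))$ consists of two elements of $\mathcal{Z}_0 \subset H^3(\mathbb{R}) \cap H^{2,1}(\mathbb{R})$ with identical reflection coefficients, so the inverse-scattering bijection forces $u^{(1)}(t,\cdot) = \hat{u}(t,\cdot)$ in $H^3(\mathbb{R}) \cap H^{2,1}(\mathbb{R})$. Because this holds for every $t \in (-T,T)$ and $\hat{u}$ is by construction a classical $\mathcal{Z}_0$-solution of (\ref{cs})--(\ref{cs1}), $u^{(1)}$ inherits the same property; moreover it extends to all $t \in \mathbb{R}$ since $\hat{u}$ does.

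The step that will require the most care, and which I would set up first, is verifying the hypotheses of Lemma \ref{71} uniformly in $t$: one must check that $t \mapsto u^{(1)}(t,\cdot)$ is continuous into $H^3(\mathbb{R}) \cap H^{2,1}(\mathbb{R})$ so that a reflection coefficient is well-defined at each $t$ and obeys the evolution formula. I expect this continuity to propagate from the continuity in $t$ of $u(t,\cdot)$ through the Volterra equation (\ref{fi5}) to the Jost function $\eta(t,\cdot) = \varphi_-(\cdot,t;k_1) e^{-ik_1^2 x}$, and then through the Darboux formula (\ref{u2bh}) together with the uniform bound (\ref{lk1e}) of Proposition \ref{34} to $u^{(1)}(t,\cdot)$. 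Once this continuity is in hand, no new algebraic identities are required beyond those already derived, and the proposition follows directly from Lemma \ref{71} and the $\mathcal{Z}_0$-uniqueness statement of \cite{laff2}.
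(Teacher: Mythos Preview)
Your proposal is correct and follows essentially the same route as the paper's own proof: use Lemma \ref{71} to match the reflection coefficients of $u^{(1)}(t,\cdot)$ and $\hat{u}(t,\cdot)$, then invoke the injectivity of the inverse scattering map $r \mapsto u$ on $\mathcal{Z}_0$ from \cite{laff2} to conclude $u^{(1)}(t,\cdot)=\hat{u}(t,\cdot)$ for every $t\in(-T,T)$, whence $u^{(1)}$ solves the FL equation. The only difference is that you explicitly flag the continuity of $t\mapsto u^{(1)}(t,\cdot)$ into $H^3\cap H^{2,1}$ as a point requiring care, whereas the paper's proof simply takes this for granted; your instinct there is sound but it is not a different argument.
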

\begin{proof}
In   \cite{laff2}, the existence and the Lipschitz continuity of the mapping $L^{2,1}(\mathbb{R} \cup i \mathbb{R}) \supset U \ni r \mapsto u \in \mathcal{Z}_0 \subset H^3(\mathbb{R}) \cap H^{2,1}(\mathbb{R})$ are established by means of the solvability of the associated Riemann-Hilbert problem. Therefore, the same reflection coefficients is mapped to the same solution and $\hat{u}(t, \cdot)=u^{(1)}(t, \cdot)$ for every $t \in(-T, T)$ with the result in Lemma \ref{71}. As $\hat{u}$ is a solution of the FL equation, so does $u^{(1)}$.
\end{proof}

\subsection{Existence of global solution}
    \hspace*{\parindent}

Combing Lemma \ref{71} and \ref{72}, we immediately obtain the following conclusion.
\begin{proposition}
Fix $k_1 \in \mathbb{C}_I$. Given a local solution $u(t, \cdot) \in H^3(\mathbb{R}) \cap H^{2,1}(\mathbb{R})$, $t \in(-T, T)$ to the Cauchy problem (\ref{cs1}) for some $T>0$, we define
\begin{equation}
\eta(t, x):=\varphi_-(t, x ; k_1) e^{-i(k_1^2 x+\eta^2(k_1)t)},
\end{equation}
where $\varphi_-$is the Jost function of the linear system (\ref{cslp}) and (\ref{cslp2}). Then, $u^{(1)}(t, \cdot)=D (\eta(t, \cdot),k_1) u(t, \cdot)$ belongs to $H^3(\mathbb{R}) \cap H^{2,1}(\mathbb{R})$ for every $t \in[0, T)$ and satisfies the Cauchy problem (\ref{cs1}) for $u^{(1)}(0,\cdot)=D (\eta(0, \cdot),k_1) u(0, \cdot)$.
\label{t1}
\end{proposition}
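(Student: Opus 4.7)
The statement packages three ingredients already established earlier in the paper: Proposition \ref{53} preserves the Sobolev space $H^3(\mathbb{R})\cap H^{2,1}(\mathbb{R})$ under the Darboux transformation at any fixed time, Proposition \ref{72} identifies $u^{(1)}$ with a genuine solution of the FL equation on the local existence interval, and the initial-data identity at $t=0$ is tautological. The plan is to verify each of these in turn for the time-parametrised family at hand.

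First, for each fixed $t\in[0,T)$ the hypothesis gives $u(t,\cdot)\in H^3(\mathbb{R})\cap H^{2,1}(\mathbb{R})$, and $\varphi_-(t,\cdot;k_1)$ is the Jost function of the spatial problem (\ref{laxn1}) with frozen potential $u(t,\cdot)$. Proposition \ref{53} then applies verbatim, yielding the pointwise-in-$t$ membership
\begin{equation*}
 u^{(1)}(t,\cdot)=D(\eta(t,\cdot),k_1)\,u(t,\cdot)\in H^3(\mathbb{R})\cap H^{2,1}(\mathbb{R}).
\end{equation*}
The role of the extra factor $e^{-i\eta^2(k_1)t}$ in the definition of $\eta(t,x)$ is essential at this stage: with this factor $\eta$ satisfies \emph{both} halves of the Lax pair (\ref{cslp})--(\ref{cslp2}) at $k=k_1$ simultaneously, not merely the spatial one, so that the Darboux matrix $T(\eta,k,k_1)$ is compatible with both the $x$- and $t$-flows.

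Second, because $\eta$ solves the full Lax pair, Proposition \ref{72} applies and gives that $u^{(1)}(t,\cdot)$ coincides on $(-T,T)$ with the unique $\hat{u}(t,\cdot)\in\mathcal{Z}_0$ obtained from the initial datum $u^{(1)}(0,\cdot)\in\mathcal{Z}_0$ via the global well-posedness result of \cite{laff2}; the latter membership is guaranteed by Lemma \ref{l41} together with Proposition \ref{53}. Hence $u^{(1)}$ solves the FL equation on $[0,T)$, and the initial condition is immediate from the defining formula evaluated at $t=0$.

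The main obstacle is the coordination between the spatial and temporal flows needed to invoke Proposition \ref{72}: the matching of reflection coefficients in Lemma \ref{71} rests on the evolution law $r^{(1)}(t;k)=r^{(1)}(0;k)e^{2i\eta^2 t}$, which in turn presupposes that $\eta$ is a joint eigenfunction of the Lax pair rather than only of its spatial part. Once the phase factor in $\eta(t,x)$ is chosen so as to secure this, the Lipschitz continuity of the inverse-scattering map from \cite{laff2} transfers continuity in $t$ of $\hat{u}$ to continuity of $u^{(1)}$ as an $H^3(\mathbb{R})\cap H^{2,1}(\mathbb{R})$-valued map on $[0,T)$, closing the argument.
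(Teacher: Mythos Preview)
Your proposal is correct and follows essentially the same route as the paper: both arguments invoke Proposition \ref{53} for the $H^3\cap H^{2,1}$ membership at each fixed $t$, and Lemma \ref{71} together with Proposition \ref{72} to identify $u^{(1)}(t,\cdot)$ with the genuine FL solution $\hat u(t,\cdot)\in\mathcal{Z}_0$ launched from $u^{(1)}(0,\cdot)$. The paper's own proof additionally records the $N$-fold iteration $u^{(l)}=D(\eta^{(l-1)},k_l)u^{(l-1)}$ that carries $u\in\mathcal{Z}_N$ down to $\mathcal{Z}_0$, which you do not address since the stated proposition concerns a single $k_1$; for that single-step case your outline and the paper's ``argument above'' coincide.
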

\begin{proof}
  The proof of Theorem \ref{t1} in the case of finitely many solitons relies on the iterative use of the argument above. For a given $u \in \mathcal{Z}_N, N \in \mathbb{N}$, we remove the distinct eigenvalues $\left\{k_1, \ldots k_N\right\}$ in $\mathbb{C}_I$ by iterating the Darboux transformation $N$ times. We set $u^{(0)}=u$ and
$$
u^{(l)}=D \left(\eta^{(l-1)},k_1\right) u^{(l-1)}, \quad(1 \leq l \leq N),
$$
which eventually constructs $u^{(N)} \in Z_0$. The arguments of Lemma \ref{71} and \ref{72} apply to the last potential $u^{(N)}$. As a result, the $N$-fold iteration of the Darboux transformation of a solution $u(t, \cdot) \in \mathcal{Z}_N$ of the Cauchy problem (\ref{cs})-(\ref{cs1}) for $t \in[0, T)$ produces a new solution $u^{(N)}(t, \cdot) \in \mathcal{Z}_0$ of the Cauchy problem (\ref{cs})-(\ref{cs1}). Figure \ref{fiey} gives the proof diagram.
\end{proof}

\noindent{\bf Proof of the Theorem \ref{74}}

\begin{proof}
  Let $u_0 \in \mathcal{Z}_1 \subset H^3(\mathbb{R}) \cap H^{2,1}(\mathbb{R})$ and $k_1 \in \mathbb{C}_I$ be the only root of $a(k)$ in $\mathbb{C}_I$. By Lemma \ref{l41}, if $\eta(x)=\varphi_-(x ; k_1) e^{-i k_1^2 x}$, where $\varphi_-$ is the Jost function of the spectral problem (\ref{cslp}) associated with $u_0$, then $u_0^{(1)}=D (\eta,k_1) u_0$ belongs to $\mathcal{Z}_0 \subset H^3(\mathbb{R}) \cap H^{2,1}(\mathbb{R})$. Also, the mapping is invertible with $u_0=D (\widetilde{\eta},k_1) u_0^{(1)}$, where $\widetilde{\eta}$ is expressed in term of the new Jost functions $\varphi_-^{(1)}$ and $\phi_+^{(1)}$ by the decomposition formula (\ref{exx1}).

  Let $T>0$ be the maximal existence time for the solution $u(t, \cdot) \in \mathcal{Z}_1, t \in(-T, T)$ to the Cauchy problem (\ref{cs})-(\ref{cs1}) with the initial data $u_0 \in \mathcal{Z}_1$ and the eigenvalue $k_1$. Then, for every fixed $t \in(-T, T)$, the solution $u(t, \cdot) \in \mathcal{Z}_1$ admits the Jost functions $\left\{\varphi_{pm}(t, x ; k), \phi^{\pm}(t, x ; k)\right\}$. For every $t \in(-T, T)$, we define $u^{(1)}$ by the Darboux transformation
\begin{equation}
  u^{(1)}:=D (\eta,k_1) u, \quad \eta(t,x;k):=\varphi_-\left(t,x; k_1\right) e^{-i(k_1^2 x+\eta^2(k_1)t)}
\end{equation}
  where the boundary conditions (\ref{j1j}) are used in the definition of $\varphi_-(t, x ; k_1)$ for every $t \in(-T, T)$.

  By Lemma \ref{72}, $u^{(1)}(t, \cdot) \in \mathcal{Z}_0, t \in(-T, T)$ is a solution of the Cauchy problem (\ref{cs})-(\ref{cs1}) with the initial data $u_0^{(1)} \in \mathcal{Z}_0$. By the existence and uniqueness results \cite{cqy,laff2}, the solution $u^{(1)}(t, \cdot) \in \mathcal{Z}_0$ is uniquely continued for every $t \in \mathbb{R}$. Let $\left\{\varphi_\pm^{(1)}(t, x ; k), \phi_\pm^{(1)}(t, x ; k)\right\}$ be the Jost functions for $u^{(1)}(t, x)$. For every $t \in(-T, T)$, we have $u=D (\eta^{(1)},k_1) u^{(1)}$ with
\begin{equation}
  \widetilde{\eta}(t, x)=\frac{\gamma e^{-i\left(k_1^2 x+\eta^2t\right)}}{\bar{k}_1 a^{(1)}(k_1)} \varphi_-^{(1)}(t, x ; k_1)+\frac{e^{i\left(k_1^2 x+2\eta(k_1)^2t\right)}}{\bar{k}_1 a^{(1)}(k_1)} \phi_+^{(1)}(t, x ; k_1),
\end{equation}
  where $a^{(1)}(k_1) \neq 0$ thanks to Lemma \ref{l41}.

On the other hand, as $u^{(1)}(t, \cdot) \in \mathcal{Z}_0$ exists for every $t \in \mathbb{R}$, the associated Jost functions $\{\varphi_\pm^{(1)}(t, x ;k), \phi_\pm^{(1)}(t, x ; k)\}$ exist for every $t \in \mathbb{R}$. Therefore, we can define $\tilde{u}=D  (\widetilde{\eta},k_1) u^{(1)},\ t \in \mathbb{R}$.
As $u(t, \cdot)=\hat{u}(t, \cdot) \in \mathcal{Z}_1$ for every $t \in(-T, T)$ by uniqueness, the extended function $\tilde{u}$ is unique to the solution $u$ of the same Cauchy problem (\ref{cs})-(\ref{cs1}), which exists globally in time thanks to the bound (\ref{lk1e}) which is proved in Lemma \ref{34}. Indeed, by \cite{laff2} we have $\left\|u^{(1)}(t, \cdot)\right\|_{H^3 \cap H^{2,1}} \leq M_T$ for every $t \in(-T, T)$, where $T>0$ is arbitrary and $M_T$ depends on $T$. Next, with the bound in Lemma \ref{53}, we have $\|u(t, \cdot)\|_{H^3 \cap H^{2,1}} \leq$ $C_{M_T}$ for every $t \in(-T, T)$. Thus, the solution will not blow up in a finite time and hence there exists a unique global solution $u(t, \cdot) \in \mathcal{Z}_1, t \in \mathbb{R}$ to the Cauchy problem (\ref{cs})-(\ref{cs1}) for every $u_0 \in \mathcal{Z}_1 \subset H^3(\mathbb{R}) \cap H^{2,1}(\mathbb{R})$.

By iterating the Darboux transformation $N$ times and by the same argument as above, we prove the global existence of $u(t, \cdot) \in \mathcal{Z}_N \subset H^3(\mathbb{R}) \cap H^{2,1}(\mathbb{R}), t \in \mathbb{R}$ from the global existence of $u^{(N)}(t, \cdot) \in \mathcal{Z}_0 \subset H^3(\mathbb{R}) \cap H^{2,1}(\mathbb{R}), t \in \mathbb{R}$. This completes the proof of Theorem \ref{74}.
\end{proof}

    \noindent\textbf{Acknowledgements}

    This work is supported by  the National Natural Science
    Foundation of China (Grant No. 11671095, 51879045).\vspace{2mm}

    \noindent\textbf{Data Availability Statements}

    The data which supports the findings of this study is available within the article.\vspace{2mm}

    \noindent{\bf Conflict of Interest}

    The authors have no conflicts to disclose.

\end{document}